\theoremstyle{plain}
\newtheorem{lem}{Lemma}[section]
\newtheorem{thm}[lem]{Theorem}
\newtheorem{cor}[lem]{Corollary}
\theoremstyle{definition}
\newtheorem*{rem}{Remark}
\newtheorem{ex}[lem]{Example}
\newtheorem{defn}[lem]{Definition}
\newcommand{\Z}{\mathbb{Z}}
\newcommand{\N}{\mathbb{N}}
\newcommand{\Q}{\mathbb{Q}}
\newcommand{\F}{\mathcal{F}}
\newcommand{\G}{\mathcal{G}}
\newcommand{\cM}{\mathcal{M}}
\newcommand{\cR}{\mathcal{R}}
\newcommand{\Sc}{\mathcal{S}}
\newcommand{\PSL}{\mathrm{PSL}}
\title{$q$-rationals and dimers}
\author{Valentin Ovsienko}
\date{}
\begin{document}

\maketitle

\begin{abstract}

We describe the relationships between the notion of $q$-deformed rational numbers, 
introduced in our previous work with Sophie Morier-Genoud, and the theory of dimer models.
We show that $q$-deformed rationals can be calculated
in terms of perfect matchings of certain bipartite graphs,
known as snake graphs, or ribbon tiles, etc.
equipped with a certain weight function on the set of edges.
We apply some elements of the dimer theory to get more information about $q$-rationals.

\end{abstract}

\section{Introduction and the main result}

The notion of $q$-deformed rational numbers, or $q$-rationals, was introduced
in~\cite{MGOfmsigma}.
This $q$-deformation can be understood as a map
\begin{equation}
\label{QMap}
[\,.\,]_q:\Q\cup\{\infty\}\to\Z(q)
\end{equation}
associating a rational function in one (formal) variable~$q$ with each rational number.
The map~\eqref{QMap} is determined by the property of
invariance (or equivariance) with respect to the action of the modular group $\PSL(2,\Z)$,
and the value at one point, say $[0]_q=0$.
For $n\in\N$, the $q$-deformation is given by the familiar formula
\begin{equation}
\label{EQInt}
\left[n\right]_q=
1+q+q^2+\cdots+q^{n-1}=\frac{1-q^n}{1-q},
\end{equation}
that goes back to Euler and Gauss.
When $\frac{r}{s}\in\Q$ such that $\frac{r}{s}\geq1$, one has
$$
\left[\frac{r}{s}\right]_q=\frac{\cR(q)}{\Sc(q)},
$$
where $\cR$ and $\Sc$ are monic polynomials with
positive integer coefficients.
Combinatorial properties of $q$-rational numbers have been studied by several authors; 
see, e.g.~\cite{BBL,FQ,Las,MPS,Ogu,OgRa,Ove,Tho}.
The notion of $q$-deformed rationals was extended to irrationals in~\cite{MGOexp},
first in terms of formal power series in~$q$, but later as analytic functions; see~\cite{Eti}.
For the analytic part of the theory; see~\cite{Eti,LMGOV,OP,OU}.
For a survey; see~\cite{MGOSur}.

The goal of the present paper is to link the recent theory 
of $q$-rationals to the well developed theory of dimer models.
We recover the polynomials $\cR$ and $\Sc$ of a $q$-deformed rational $\left[\frac{r}{s}\right]_q$
as the number of perfect matchings (or dimer coverings) with weighted edges
of a certain bipartite graph associated with~$\frac{r}{s}$.
This paper continues the recent preprint~\cite{EJMO} where
a similar combinatorial model was elabotrated for $q$-deformed Markov numbers.

Every rational number $\frac{r}{s}$ is associated (see~\cite{CR,CS}) 
with a graph called {\it snake graph} $\G_{\frac{r}{s}}$
(also known as ``ribbon polyomino'', ``ribbon tile''~\cite{Pak}, ``rim hook''~\cite{SW}, 
and ``border strip''; see~\cite{Sta}, p.345),
the ``snake terminology''  goes back to Propp~\cite{Pro}.
This is a connected bipartite graph in the plane
with vertical and horizontal edges
obtained by gluing elementary square boxes along edges:
a new box is attached to the previous one at the right or at the top.
For details; see Section~\ref{RatSec}.
For an example of the snake graph associated to
$\frac{179}{74}$; see Figure~\ref{SnakeGrid}
and other figures throughout the paper.

Given a rational $\frac{r}{s}$,
a beautiful theorem of \c{C}anak\c{c}i and Schiffler~\cite{CR} states that
the numerator~$r$
can be calculated as  the number of perfect matchings
of the corresponding snake graph $\G_{\frac{r}{s}}$.
The statement for the denominator~$s$ is similar and uses a smaller snake graph.

We extend the result of \c{C}anak\c{c}i and Schiffler to the $q$-deformed case
using snake graphs with weighted edges.
Note that a different way to calculate $q$-rationals with the help of
snake graphs and matrix realization of continued fractions was suggested in~\cite{BOSZ}.

\begin{defn}
\label{MainDefn}
Our rule to assign weights to the edges of snake graphs is the following.

\begin{enumerate}

\item
Consider the $\Z^2$-grid colored with two colors in the following  way.
\begin{figure}[H]
\begin{center}
\begin{tikzpicture}[scale=0.7]
\draw[step=1.0,black] (0.5,0.5) grid (9.5,6.5);
\draw[line width=2pt, blue ] (1,5) -- (1,6) ;
\draw[line width=2pt, red ] (1,4) -- (1,5)  (2,5) -- (2,6) -- (3,6);
\draw[line width=2pt, blue ] (1,3) -- (1,4)  (2,4) -- (2,5) -- (3, 5) -- (3,6) -- (4,6);
\draw[line width=2pt, red ] (1,2) -- (1,3)  (2,3) -- (2,4) -- (3,4) -- (3,5)-- (4,5)-- (4,6)-- (5,6);
\draw[line width=2pt, blue ] (1,1) -- (1,2)  (2,2) -- (2,3) -- (3, 3) -- (3,4) -- (4,4) -- (4,5) -- (5,5) -- (5,6)-- (6,6);
\draw[line width=2pt, blue ] (2,1) -- (3,1) -- (3,2) -- (4,2) -- (4, 3) -- (5,3) -- (5,4) -- (6,4) -- (6,5) -- (7,5)-- (7,6)-- (8,6);
\draw[line width=2pt, red ] (2,1) -- (2, 2) -- (3,2) -- (3,3) -- (4,3) -- (4,4) -- (5,4)-- (5,5)-- (6,5)-- (6,6)-- (7,6);
\draw[line width=2pt, red ] (3,1) -- (4,1) -- (4,2) -- (5,2) -- (5,3) -- (6,3) -- (6,4)-- (7,4)-- (7,5)-- (8,5)-- (8,6)-- (9,6);
\draw[line width=2pt, blue ] (4,1) -- (5,1) -- (5,2) -- (6,2) -- (6, 3) -- (7,3) -- (7,4) -- (8,4) -- (8,5)-- (9,5)-- (9,6);
\draw[line width=2pt, red ] (5,1) -- (6,1) -- (6,2) -- (7,2) -- (7,3) -- (8,3) -- (8,4)-- (9,4)-- (9,5);
\draw[line width=2pt, blue ] (6,1) -- (7,1) -- (7,2) -- (8,2) -- (8, 3)-- (9,3)-- (9,4);
\draw[line width=2pt, red ] (7,1) -- (8,1) -- (8,2)-- (9,2)-- (9,3);
\draw[line width=2pt, blue ] (8,1) -- (9,1)-- (9,2);
\draw[thick, ->] (1,6) -- (1,7);
\draw[line width=0.7pt] (1,0.5) -- (1,1);
\draw[line width=0.7pt] (0.5,1) -- (2,1);
\draw[thick, ->] (9,1) -- (10,1);
\end{tikzpicture}
\caption{Colored $\Z^2$-grid.}
\label{TheGrid}
\end{center}
\end{figure}

\noindent
(Observe that the horizontal segments in the first column are not colored.)

\item
Draw a snake graph in the grid with the down left vertex in the origin:

\begin{figure}[H]
	\centering
	\begin{tikzpicture}[scale=0.7]
\draw[step=1.0,black] (0.5,0.5) grid (9.5,6.5);
		\draw[line width=2pt] (1,1)-- (1,2);
		\draw[line width=2pt] (1,1)-- (4,1);
		\draw[line width=2pt] (1,2)-- (3,2);
		\draw[line width=2pt] (3,2)-- (3,4);
		\draw[line width=2pt] (4,1)-- (4,3);
		\draw[line width=2pt] (4,3)-- (8,3);
		\draw[line width=2pt] (3,4)-- (7,4);
		\draw[line width=2pt] (8,3)-- (8,6);
		\draw[line width=2pt] (7,4)-- (7,6);
		\draw[line width=2pt] (7,6)-- (8,6);
	
			\end{tikzpicture}
\caption{A snake graph $\G_{\frac{179}{74}}$ embedded  into $\Z^2$-grid.}
\label{SnakeGrid}
\end{figure} 

\item
Color the {\it western} and the {\it southern} borders 
of the snake graph according to the colors of the grid 
and assign the weight $q$  to the blue edges and $q^{-1}$ to the red edges.
Uncolored edges have weight~$1$; see Figure~\ref{WeightedSnake}.
\begin{figure}[H]
	\centering
	\begin{tikzpicture}[scale=0.8]
		\draw[line width=2pt,blue] (0,0)-- node[left]{$q$}(0,1);
		\draw[line width=0.7pt] (2,2)-- (2,3);
		\draw[line width=0.7pt] (0,0)-- (1,0);
		\draw[line width=2pt,blue] (1,0)-- node[below]{$q$}(2,0);
	        \draw[line width=2pt,red] (2,0)-- node[below]{$q^{-1}$}(3,0);
		\draw[line width=0.7pt] (0,1)-- (3,1);
		\draw[line width=0.7pt] (1,0)-- (1,1);
		\draw[line width=0.7pt] (2,0)-- (2,1);
		\draw[line width=2pt,red] (2,1)-- node[left]{$q^{-1}$}(2,2);
		\draw[line width=2pt,blue] (2,2)-- node[left]{$q$}(2,3);
		\draw[line width=0.7pt] (2,2)-- (3,2);
		\draw[line width=2pt,blue] (3,2)-- node[below]{$\;q$}(4,2);
		\draw[line width=2pt,red] (4,2)-- node[below]{$\;q^{-1}$}(5,2);
		\draw[line width=0.7pt] (3,0)-- (3,3);
		\draw[line width=0.7pt] (2,3)-- (5,3)--(7,3);
                 \draw[line width=0.7pt] (4,2)-- (4,3);
                 \draw[line width=2pt,red] (6,3)--  node[left]{$q^{-1}$}(6,4);
                 \draw[line width=2pt,blue] (6,4)-- node[left]{$q$}(6,5);
                 \draw[line width=0.7pt] (7,2)-- (7,5);
                 \draw[line width=0.7pt] (6,5)-- (7,5);
                 \draw[line width=0.7pt] (6,4)-- (7,4);
                 \draw[line width=0.7pt] (7,4)-- (7,5);
                 \draw[line width=0.7pt] (6,4)-- (6,5);
                 \draw[line width=2pt,blue] (5,2)-- node[below]{$q$}(6,2);
		\draw[line width=2pt,red] (6,2)-- node[below]{$q^{-1}$}(7,2);
		
		    \draw[line width=0.7pt] (5,3)-- (5,2);
		        \draw[line width=0.7pt] (6,3)-- (6,2);		
			\end{tikzpicture}
\caption{A snake graph with weighted edges induced from the grid.}
\label{WeightedSnake}
\end{figure} 
(Note that the eastern and the northern border remain uncolored and have weight~$1$.)

\end{enumerate}

\end{defn}

\begin{rem}
(a)
The above definition of weight system on snake graphs is very close to that of~\cite{EJMO},
but it does not coincide with it.
The difference between Definition~\ref{MainDefn} and that of~\cite{EJMO}
is that the first column of the $\Z^2$-grid in Figure~\ref{TheGrid} is uncolored and produces no weight on embedded snake graphs.
In~\cite{EJMO}, the first column is colored in the same way as the rest of the $\Z^2$-grid.

(b)
An important property of the coloring (or assigning weights) that we choose
in the present paper consists in the fact that
every elementary box of a snake graph has exactly one colored edge;
see Figure~\ref{SnakeBoxes}.
\begin{figure}[H]
	\centering
	\begin{tikzpicture}[scale=0.7]

	 \begin{scope}[shift={(-4,0)}]
        \draw[line width=2pt,blue] (0,2)-- (0,3);
    \draw[line width=0.7pt] (0,2)--(1,2);
    \draw[line width=0.7pt] (1,3)--(1,2);
    \draw[line width=0.7pt] (1,3)--(0,3);
    
\end{scope}

 \begin{scope}[shift={(-1,0)}]
        \draw[line width=0.7pt] (0,2)-- (0,3);
    \draw[line width=2pt,red] (0,2)--(1,2);
    \draw[line width=0.7pt] (1,3)--(1,2);
    \draw[line width=0.7pt] (1,3)--(0,3);

\end{scope}

	 \begin{scope}[shift={(2,0)}]
        \draw[line width=2pt,red] (0,2)-- (0,3);
    \draw[line width=0.7pt] (0,2)--(1,2);
    \draw[line width=0.7pt] (1,3)--(1,2);
    \draw[line width=0.7pt] (1,3)--(0,3);

\end{scope}

 \begin{scope}[shift={(5,0)}]
        \draw[line width=0.7pt] (0,2)-- (0,3);
    \draw[line width=2pt,blue] (0,2)--(1,2);
    \draw[line width=0.7pt] (1,3)--(1,2);
    \draw[line width=0.7pt] (1,3)--(0,3);

\end{scope}

	\end{tikzpicture}
\caption{Elementary boxes.}
\label{SnakeBoxes}
\end{figure}
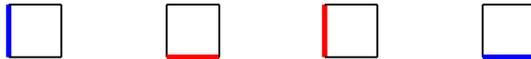 

\noindent
This property will be important for the choice of Kasteleyn orientation on snake graphs.
\end{rem}

The following definitions are standard in the theory of dimer models.

\begin{defn}
(a)
A {\it perfect matching}, or dimer cover of a graph $\G$ is a collection of edges
$\mathfrak{m}$ of $\G$, such that every vertex of $\G$
belongs to one and only one edge from $\mathfrak{m}$.

(b)
 For every perfect matching $\mathfrak{m}$ of a graph $\G$ with weighted edges,
we define the {\it weight} ${\rm{wt}}(\mathfrak{m})$ of $\mathfrak{m}$ taking the product of weights of its edges.
The weight is a power of~$q$.

(c)
Given a  graph $\mathcal{G}$ with weighted edges, 
the {\it the weighted number of perfect matchings}, or the {\it ``statistics''}
is the generating function for the weighted perfect matchings:
\begin{equation}
\label{StatEq}
\cM_q(\mathcal{G}):=
\sum_{\mathfrak{m}}
{\rm{wt}}(\mathfrak{m}).
\end{equation}
\end{defn}

\begin{rem}
Note that, according to Definition~\ref{MainDefn},
the weight of a perfect matching of a snake graph is a power of~$q$, while the generating function $\mu_{\mathcal{G}}(q)$
 is a Laurent polynomial in~$q$.
 \end{rem}

As mentioned, every rational $\frac{r}{s}\geq1$ corresponds to a snake graph $\G_{\frac{r}{s}}$
determined by the continued fraction expansion of~$\frac{r}{s}$;
see Section~\ref{DefSG}.
The main result of the paper is as follows.

\begin{thm}
\label{SnakeThm}
For every rational $\frac{r}{s}\geq1$, and the corresponding
$q$-deformed rational $\left[\frac{r}{s}\right]_q=\frac{\cR(q)}{\Sc(q)}$,  the polynomial $\cR$ in the numerator is equal (up to a scalar multiple) 
to the weighted number of perfect matchings
$\cM_q(\mathcal{G}_{\frac{r}{s}})$ in the snake graph $\mathcal{G}_{\frac{r}{s}}$ 
corresponding to~$\frac{r}{s}$.
\end{thm}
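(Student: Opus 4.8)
The plan is to prove the equivalent statement that the statistic $\cM_q(\G_{\frac rs})$ satisfies the same recurrence as the numerator $q$-continuant $\cR$, and then to match initial data. Write $\frac rs=[a_1,\dots,a_n]$ for the (regular) continued fraction expansion; by \cite{MGOfmsigma} the numerator $\cR(q)$ is a $q$-deformed continuant $K_q(a_1,\dots,a_n)$, computed by a product of $2\times2$ matrices indexed by the $a_i$ in which the deformation parameter alternates between $q$ and $q^{-1}$ from one entry to the next, and obeying a two-step (continuant-type) recurrence in $n$. In parallel, Section~\ref{DefSG} realizes $\G_{\frac rs}$ as a concatenation of segments, one per entry $a_i$, joined at turns. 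I would first establish a matching recurrence of the same shape for $\cM_q$ and then identify the two term by term, the discrepancy being absorbed into the global monomial of the statement.

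For the matching side I would use the transfer-matrix (equivalently, skein-resolution) method for snake graphs. Building $\G_{\frac rs}$ one box at a time and writing $\G^{(i)}$ for the subgraph on the first $i$ boxes, one resolves at the last box according to whether it is matched ``inside'' or through the edge it shares with $\G^{(i-1)}$; this yields a two-step relation expressing $\cM_q(\G^{(i)})$ through $\cM_q(\G^{(i-1)})$ and $\cM_q(\G^{(i-2)})$. The essential input is the Remark following Definition~\ref{MainDefn}: each box carries exactly one colored edge, so every resolution contributes a single unambiguous monomial, $q$ for a blue edge and $q^{-1}$ for a red one. Summing the local contributions over a whole $a_i$-segment, whose colored edges follow the fixed alternating pattern of the grid in Figure~\ref{TheGrid}, collapses to a $q$-integer entry $[a_i]_{q^{\pm1}}$ of the continuant matrix; the alternation of the dominant color along successive anti-diagonals of the grid is exactly what produces the alternation $q\leftrightarrow q^{-1}$ between consecutive factors. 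Carrying this out run by run shows that $\cM_q$ and $\cR$ obey identical recurrences up to conjugation by a diagonal matrix of powers of $q$.

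For the base case I would take $\frac rs=N$ an integer, whose snake graph is the zigzag of $N-1$ boxes; a direct count gives $\cM_q=[N]_q=1+q+\cdots+q^{N-1}$, in agreement with \eqref{EQInt} and with the classical count $r=N$ of \cite{CR}. (Already the single box, with its one colored edge, carries the two matchings of weights $1$ and $q$, so that $\cM_q=[2]_q$; and in the $N=4$ zigzag the three colored edges lie on same-color anti-diagonals, giving the four weights $1,q,q^2,q^3$.) Together with the matched recurrence this would complete the induction, identifying $\cM_q(\G_{\frac rs})$ with $\cR$ up to a power of~$q$.

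The step I expect to be the main obstacle is controlling that global power of~$q$, i.e. the ``scalar multiple'' in the statement. Since the weight of each matching is a single monomial $q^{\#\mathrm{blue}-\#\mathrm{red}}$, the statistic $\cM_q$ is a priori a Laurent polynomial, whereas $\cR$ is a genuine monic polynomial; to compare them one must show that $\min_{\mathfrak{m}}(\#\mathrm{blue}-\#\mathrm{red})$ is attained at a single minimal power $q^{c}$, with $c$ determined by the boundary conventions, and that after dividing by $q^{c}$ the lowest coefficient equals $1$. This is precisely where the choice of coloring in Definition~\ref{MainDefn}---with its \emph{uncolored} first column, the feature that distinguishes it from \cite{EJMO}---must be used, since that convention is what pins the minimal exponent to a matching-independent constant and makes the telescoped diagonal conjugations collapse to a single monomial. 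An alternative to the whole argument, and the route for which the Remark prepares the Kasteleyn orientation, is to evaluate $\cM_q(\G_{\frac rs})$ as a single determinant of the weighted Kasteleyn matrix and to compute that determinant directly; the one-colored-edge-per-box property is what guarantees the orientation is consistent.
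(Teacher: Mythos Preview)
Your overall strategy---prove a recurrence for $\cM_q$ on the snake side, match it against the continuant recurrence for $\cR$, and induct---is exactly the paper's. But the specific recurrence you write down is not correct, and this is a genuine gap rather than a detail.

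You claim that resolving at the last box gives a two-step scalar relation for $\cM_q(\G^{(i)})$ in terms of $\cM_q(\G^{(i-1)})$ and $\cM_q(\G^{(i-2)})$. This holds along a straight run, but fails at a turn. Take the three-box L-shape (the snake for the integer~$4$): it has $4$ matchings, while $\cM(\G^{(2)})+\cM(\G^{(1)})=3+2=5$. The reason is that when the outer edge of the new box is absent, the two forced side edges cover one vertex that is \emph{shared with box $i-2$}; this forces an additional edge of box $i-1$ and propagates the constraint further back than $\G^{(i-2)}$. In general the ``second term'' of the resolution is not $\G^{(i-2)}$ but the snake graph obtained by deleting the entire current ladder (and one more box). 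This is precisely the content of the paper's Lemma~4.2: writing $\frac{r}{s}=[a_1,\dots,a_{2m}]$ and $\frac{\tilde r}{\tilde s}=[a_1,\dots,a_{2m-1}-1]$, one has
\[
\cM_q(\G_{\frac{r}{s}})=\cM_q(\G_{\frac{r'}{s'}})+q^{-a_{2m}}\cM_q(\G_{\frac{\tilde r}{\tilde s}}),
\]
with the analogous formula (exponent $+a_{2m+1}$) in the odd case. So the recurrence is ladder-aware from the start; your ``sum over an $a_i$-segment to collapse to $[a_i]_{q^{\pm1}}$'' cannot be reached from the box-by-box relation you wrote, because that relation is false at the joins between segments. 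A correct box-by-box approach would require carrying a two-dimensional transfer state (e.g.\ matchings with the last attachment edge present vs.\ absent), not a scalar.

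Second, the paper does not defer the monomial factor; it makes it part of the inductive statement (Theorem~4.1): $\cR(q)=q^{n}\cM_q(\G_{\frac{r}{s}})$ with $n=a_2+a_4+\cdots+a_{2m}-1$. This is what allows the two recurrences above to be compared term-by-term with the matrix identities $\cR=q\cR'+\tilde\cR$ (even case) and $\cR=\cR'+q^{a_{2m+1}}\tilde\cR$ (odd case). You correctly flag the scalar as the main obstacle, but without committing to an explicit formula for it and carrying it through the induction, the two sides cannot be matched: the recurrences for $\cM_q$ and for $\cR$ genuinely differ by varying powers of $q$ at each step, and only the running value of $n$ reconciles them.
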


For a more detailed formulation of Theorem~\ref{SnakeThm}
and, in particular, the way of computing the scalar multiple; see Section~\ref{DetStat}.
The statement about the denominator $\Sc(q)$ of the $q$-rational $\left[\frac{r}{s}\right]_q$
is similar and uses a shorter snake graph, as in~\cite{CR,CS}.

Theorem~\ref{SnakeThm} connects the theory of $q$-rationals to that of dimers.
In the present paper, we apply this connection to give a new way to calculate $q$-rationals
in terms of Kasteleyn determinants.
We hope that this connection can give much more,
and that application of the dimer theory can produce many interesting results about $q$-rationals.

Let us mention that 
weighted perfect matchings of snake graphs (and generalized perfect matchings) were considered in~\cite{BOSZ},
but with weights that are not defined on the edges.
A preprint~\cite{CO} pursues this idea, the authors consider snake graphs
with weighted edges, but the weight assertion is different from ours.
For other weight systems on snake graphs and some particular cases,
such as the $q$-deformed Fibonacci sequence; see~\cite{OveP,CO}.

The paper is organized as follows.
In Section~\ref{RatSec}, we give a very brief introduction to the theory of $q$-rationals.
We collect three different but equivalent definitions that will be useful for this paper.
Following~\cite{CR,CS}, we explain in Section~\ref{TheSnakeSec} 
the way to associate a snake graph to every rational number.
Section~\ref{ProofSec} contains the proof of the main theorem and several examples.
In Section~\ref{ApplSec}, we apply the machinery of the dimer model system
and calculate $q$-rationals in terms of Kasteleyn determinants.
We observe a striking similarity of the Kasteleyn determinants and continuants.

\section{A brief introduction to $q$-rationals}\label{RatSec}

In this section, we recall three equivalent ways to define $q$-rationals.
All three definitions will be relevant for the sequel.

\subsection{The modular group $\PSL(2,\Z)$}

The most conceptual definition is based on the property of modular invariance.
The set $\Q\cup\{\infty\}$, identified with the rational projective line
$\Q\mathbb P^1$, admits a natural (transitive) action of
the modular group $\PSL(2,\Z)$.
For every $x\in\Q\cup\{\infty\}$, we have
\begin{equation}
\label{LFAct}
\begin{pmatrix}
a&b\\
c&d
\end{pmatrix}
\left(x\right)=
\frac{ax+b}{cx+d},
\qquad\qquad
a,b,c,d\in\Z,
\quad
ad-bc=1.
\end{equation}
Consider the pace $\Z(q)$ of rational functions with integer coefficients in one  formal variable~$q$.
This space also admits a $\PSL(2,\Z)$-action generated by two matrices
\begin{equation}
\label{RS}
R_{q}=
\begin{pmatrix}
q&1\\[2pt]
0&1
\end{pmatrix},
\qquad\qquad
L_{q}=
\begin{pmatrix}
q&0\\[2pt]
q&1
\end{pmatrix},
\end{equation}
acting by linear-fractional transformations.

There exists a unique map~\eqref{QMap}
commuting with the $\PSL_2(\Z)$-action and such that $[0]_q=0.$
This statement follows from the results of~\cite{MGOfmsigma}, 
it was explicitly formulated in~\cite{LMGadv}.
For a simple complete proof; see~\cite{MGOSur}.
In other words, the property of $\PSL(2,\Z)$-invariance can be expressed as two recurrent formulas:
\begin{equation}
\label{RecEq}
\left[x+1\right]_q=q[x]_q+1,
\qquad\qquad
\left[-\frac{1}{x}\right]_q=-\frac{1}{q[x]_q},
\end{equation}
for every~$x\in\Q\cup\{\infty\}$.
For every $x\in\Q\cup\{\infty\}$, the result of this map is denoted by
$[x]_q$ and is called the $q$-deformation of~$x$.

\begin{rem}
Note that the generators $R_{q}$ and $L_{q}$ are $q$-deformations 
of the standard generators of $\PSL(2,\Z)$
$$
T=
\begin{pmatrix}
1&1\\[2pt]
0&1
\end{pmatrix},
\qquad\qquad
L=
\begin{pmatrix}
1&0\\[2pt]
1&1
\end{pmatrix},
$$
The connection of the $\PSL(2,\Z)$-action generated by~\eqref{RS} to the
classical Burau representation of the braid group $B_3$ is discussed in~\cite{MGOV}.
\end{rem}

\begin{ex}
\label{FirstEx}
Let us give some examples that will be helpful
\begin{eqnarray*}
\left[\frac{5}{2}\right]_q&=&
\frac{1+2q+q^2+q^3}{1+q},
\\[6pt]
\left[\frac{7}{4}\right]_q&=&
\frac{1+q+2q^2+2q^3+q^4}{1+q+q^2+q^3},
\\[6pt]
\left[\frac{29}{12}\right]_q&=&
\frac{1+3q+5q^2+6q^3+6q^4+5q^5+2q^6+q^7}{1+2q+3q^2+3q^3+2q^4+q^5}.
\end{eqnarray*}
These examples can be calculated connecting the rationals to $0$
and using the recurrences~\eqref{RecEq}.
More precisely, one has 
$$
\left[\frac{5}{2}\right]_q=R_q^2L_q^2(0),
\qquad
\left[\frac{7}{4}\right]_q=R_qL_qR_q^3(0),
\qquad
\left[\frac{29}{12}\right]_q=R_q^2L_q^2R_q^2L_q^2(0).
$$
Alternatively, one can use the explicit formulas below.
\end{ex}

\subsection{$q$-deformed continued fractions}

Every rational number~$\frac{r}{s}>1$ 
has the following (regular) continued fraction expansion
\begin{equation}
\label{CFEx}
\frac{r}{s}
\quad=\quad
a_1 + \cfrac{1}{a_2 
          + \cfrac{1}{\ddots +\cfrac{1}{a_{k}} } },
\end{equation}
where the coefficients $a_i$ are positive integers such that $a_i\geq1$.
It is usually denoted by  
$$
\frac{r}{s}=\left[a_1,a_2,\ldots,a_{k-1},a_{k}\right].
$$
Note that the choice of coefficients in~\eqref{CFEx} is not unique.
The ambiguity 
\begin{equation}
\label{Ambigo}
\left[a_1,\ldots,a_{k},1\right]=
\left[a_1,\ldots,a_{k}+1\right]
\end{equation}
allows one to chose either even, or odd number $k$ of coefficients
and make the expansion~\eqref{CFEx} unique.

\begin{defn}
Given a rational~$\frac{r}{s}>1$,
the corresponding $q$-rational~$\left[\frac{r}{s}\right]_{q}$ is the rational function defined by
the following expression
\begin{equation}
\label{qa}
\left[\frac{r}{s}\right]_{q}:=
[a_1]_{q} + \cfrac{q^{a_{1}}}{[a_2]_{q^{-1}} 
          + \cfrac{q^{-a_{2}}}{[a_{3}]_{q} 
          +\cfrac{q^{a_{3}}}{[a_{4}]_{q^{-1}}
          + \cfrac{q^{-a_{4}}}{
        \cfrac{\ddots}{[a_{k-1}]_{q^{(-1)^k}}+\cfrac{q^{(-1)^ka_{k-1}}}{[a_{k}]_{q^{(-1)^{k+1}}}}}}
          } }} 
\end{equation}
where~$[a_i]_q$ are the usual $q$-integers given by~\eqref{EQInt}.
\end{defn}

Note that the result of~\eqref{qa} is independent of the choice
of the coefficients~$a_i$ in~\eqref{CFEx} (see~\cite{LMGadv}).
In particular, it stays invariant under~\eqref{Ambigo}.
Observe also the inversion of the parameter in the even terms.

\subsection{Continued fractions in the matrix form}

Recall that the classical continued fraction~\eqref{CFEx} can be presented in a matrix form.
More precisely, when $k$ is even, $k=2m$,
the rational $\frac{r}{s}$ can be recovered from the first column of the matrix
$$
\begin{pmatrix}
r&\tilde{r}\\
s&\tilde{s}
\end{pmatrix}
=R^{a_1}L^{a_2}\cdots R^{a_{2m-1}}L^{a_{2m}},
$$
When $k=2m+1$, one has
$$
\begin{pmatrix}
\tilde{r}&r\\
\tilde{s}&s
\end{pmatrix}
=R^{a_1}L^{a_2}\cdots L^{a_{2m}}R^{a_{2m+1}},
$$
In both cases $\frac{\tilde{r}}{\tilde{s}}$ is the rational with the continued fraction expansion
$$
\frac{\tilde{r}}{\tilde{s}}=
\left[a_1,a_2,\ldots,a_{k-1}\right].
$$

Similarly, is the $q$-deformed case, the rational function 
$\left[\frac{r}{s}\right]_q=\frac{\cR(q)}{\Sc(q)}$
is the quotient of the polynomials in the first (resp. second) column of the 
corresponding $q$-deformed matrix.
The formulas are slightly different in the even and odd cases:
\begin{eqnarray}
\label{Matqa}
\begin{pmatrix}
q\cR(q)&\tilde{\cR}(q)\\[2pt]
q\Sc(q)&\tilde{\Sc}(q)
\end{pmatrix}
&=&R_q^{a_1}L_q^{a_2}\cdots R_q^{a_{2m-1}}L_q^{a_{2m}},\\[6pt]
\label{MatqaOdd}
\begin{pmatrix}
q^{a_{2m+1}+1}\tilde{\cR}(q)&\cR(q)\\[2pt]
q^{a_{2m+1}+1}\tilde{\Sc}(q)&\Sc(q)
\end{pmatrix}
&=&R_q^{a_1}L_q^{a_2}\cdots L_q^{a_{2m}}R_q^{a_{2m+1}}.
\end{eqnarray}
where $\frac{\tilde{\cR}(q)}{\tilde{\Sc}(q)}$ is the $q$-analogue of~$\frac{\tilde{r}}{\tilde{s}}$
and where $\ell=a_1+a_3+\cdots+a_{2m+1}$ is the sum of odd coefficients.
The proof of~\eqref{Matqa} and~\eqref{MatqaOdd} is a straightforward computation;
see~\cite{MGOfmsigma}, Propositions~4.3 and~4.5.

\subsection{Continuants}

As mentioned, a $q$-rational is represented by an irreducible fraction 
$\left[\frac{r}{s}\right]_q=\frac{\cR(q)}{\Sc(q)}$
of polynomials with positive integer coefficients.
For simplicity, we assume that $\frac{r}{s}\geq0$.
It readily follows from~\eqref{qa} that the numerator can be expressed as the determinant
of a $3$-diagonal matrix:
\begin{equation}
\label{K+Eq}
\cR(q)=
\left|
\begin{array}{ccccccccc}
[a_1]_{q}&-1&&&\\[6pt]
q^{a_{1}}&[a_{2}]_{q^{-1}}&-1&&\\[6pt]
&q^{-a_{2}}&[a_3]_{q}&-1&&\\[6pt]
&&q^{a_{3}}&[a_{4}]_{q^{-1}}&-1&&\\[6pt]
&&&\ddots&\ddots&\!\!\ddots&\\[6pt]
&&&&q^{(-1)^{k+1}a_{k-2}}&[a_{k-1}]_{q^{(-1)^{k}}}&\!\!\!\!\!-1\\[6pt]
&&&&&q^{(-1)^{k}a_{k-1}}&\!\!\!\![a_{k}]_{q^{(-1)^{k+1}}}
\end{array}
\right|.
\end{equation}
This is a $q$-analogue of the classical Euler's {\it continuant}.
The notation for the determinant~\eqref{K+Eq} used in~\cite{MGOfmsigma}
is $\cR(q)=K(c_1,\ldots,c_{k})_{q}.$
The denominator of the $q$-rational is given by a similar formula with the sequence
of coefficients starting from $a_2$, namely
$\Sc(q)=K(c_2,\ldots,c_{k})_{q};$
see~\cite{MGOfmsigma}.

\subsection{An example: the $q$-Fibonacci sequence}\label{FiboSec}

A remarkable sequence of rationals is the quotient of consecutive 
Fibonacci numbers~$\frac{F_{n+1}}{F_{n}}=\big[\underbrace{1,1,\ldots,1}_n\big]$.
Its $q$-deformation was studied in~\cite{MGOfmsigma,LMGOV}.
The $q$-deformation
$
\left[\frac{F_{n+1}}{F_{n}}\right]_q=
\frac{\tilde\F_{n+1}(q)}{\F_n(q)}
$
 give rase to two sequences of polynomials~$\tilde\F_{n}(q)$ and~$\F_n(q)$
 in the numerator and denominator.
These polynomials are of degree~$n-2$ (for~$n\geq2$)
and are mirror of each other:
$
\tilde\F_n(q)=q^{n-2}\F_n(q^{-1}).
$
Both of them are $q$-deformations of the Fibonacci numbers.

Both of these sequences~$\tilde\F_{n}(q)$ and~$\F_n(q)$ satisfy the same recurrence
$$
\F_{n+2}(q)= [3]_q\,\F_n(q)-q^2\F_{n-2}(q),
$$
where, as before, $[3]_q=1+q+q^2$, and the initial conditions
$$
\F_0(q)=1,\quad
\F_2(q)=[2]_q
\qquad\hbox{and}\qquad
\F_1(q)=1,\quad
\F_3(q)=[3]_q
$$
(see~\cite{LMGOV}).
This allows one to calculate the sequences~$\tilde\F_{n}(q)$ and~$\F_n(q)$ recurrently.

\begin{ex}
\label{FibpPEx}
One has
$$
\begin{array}{rcl}
\left[\frac{5}{3}\right]_q&=&\displaystyle\frac{1+q+2q^2+q^3}{1+q+q^2},
\\[12pt]
\left[\frac{8}{5}\right]_q&=&\displaystyle\frac{1+2q+2q^2+2q^3+q^4}{1+2q+q^2+q^3},
\\[12pt]
\left[\frac{13}{8}\right]_q&=&\displaystyle\frac{1+2q+3q^2+3q^3+3q^4+q^5}{1+2q+2q^2+2q^3+q^4},
\\[12pt]
\left[\frac{21}{13}\right]_q&=&\displaystyle\frac{1+3q+4q^2+5q^3+4q^4+3q^5+q^6}{1+3q+3q^2+3q^3+2q^4+q^5},
\\
\ldots&\ldots&\ldots
\end{array}
$$
\end{ex}

The coefficients of the polynomials~$\tilde\F_{n}(q)$ and~$\F_n(q)$
appear in Sequences~A123245 and~A079487 of OEIS~\cite{OEIS}, respectively.

\begin{rem}
According to~\eqref{qa}, the limit of the sequence of rational functions 
$\left[\frac{F_{n+1}}{F_{n}}\right]_q$
can be calculated using the continued fraction
\begin{equation}
\label{FiboCF}
\left[\varphi\right]_q=
1 + \cfrac{q^{2}}{q
          + \cfrac{1}{1 
          +\cfrac{q^{2}}{q
          + \cfrac{1}{\ddots
       }}}} 
\end{equation}
cut after $n$ terms.
Note that the infinite continued fraction~\eqref{FiboCF}
understood as a power series in~$q$ is the $q$-analogue of the golden ratio
$\varphi=\frac{1+\sqrt{5}}{2}$, as defined in~\cite{MGOexp}.
For more details on $q$-irrationals; see~\cite{Eti,LMGadv,LMGOV,OP}.
\end{rem}

\section{Rational numbers and snake graphs}\label{TheSnakeSec}

Although snake graphs, called by different names, have always been present forever in mathematics, 
their first association with rational numbers and continued fractions is due to 
\c{C}anak\c{c}i and Schiffler; see~\cite{CR} (and also~\cite{CS}).
In this section, we briefly overview their construction and one of their results.

\subsection{Definition of the snake graph $\G_{\frac{r}{s}}$}\label{DefSG}

Following~\cite{CR}, let us associate a sequence of $+$ and $-$ signs to every snake graph.
One starts with the initial box with the south and east edges labeled by $-$ 
and the north edge labeled by $+$; see Figure~\ref{BoxSign},
\begin{figure}[H]
	\centering
	\begin{tikzpicture}[scale=1.2]

        \draw[line width=0.7pt] (0,0)-- (0,1);
    \draw[line width=0.7pt] (0,0)--(1,0);
    \draw[line width=0.7pt] (0,1)--(1,1);
    \draw[line width=0.7pt] (1,0)--(1,1);
   
  \draw[fill=white] (0.5,0) circle (0.15);
\node[cyan] at (0.5, 0)  {$-$};

  \draw[fill=white] (0.5,1) circle (0.15);
\node[cyan] at (0.5, 1)  {$+$};

  \draw[fill=white] (1,0.5) circle (0.15);
\node[cyan] at (1,0.5)  {$-$};

	\end{tikzpicture}
\caption{Two sign arrangements on the initial box.}
\label{BoxSign}
\end{figure}
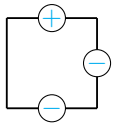 
\noindent
and then continue so that the edges of every elementary box of $\G$ are labeled 
according to the following rule.

\begin{enumerate}
\item
the north and the west edges have the same sign;
\item
the south and the east edges have the same sign;
\item
the signs on the north and south edges are opposite.
\end{enumerate}

\noindent
The sequence of signs of a snake graph starts with $-$ of the south edge of the first box
and continues with the signs of attached edges, as depicted in Figure~\ref{SignSec}.

\begin{figure}[H]
	\centering
\begin{tikzpicture}[scale=0.7]
    \begin{scope}[shift={(-6,0)}]
    \draw[line width=0.7pt] (0,0)-- (1,0);
    \draw[line width=0.7pt] (1,0)-- (2,0);
    \draw[line width=0.7pt] (0,0)-- (0,1);
    \draw[line width=0.7pt] (0,1)--(2,1)--(2,0);
    \draw[line width=0.7pt] (1,1)--(1,0);
        \draw[line width=0.7pt] (2,0)--(3,0);
    \draw[line width=0.7pt] (3,0)--(3,1)--(2,1);
    
      \draw[fill=white] (0.5,0) circle (0.2);
\node[cyan] at (0.5, 0)  {$-$};

     \draw[fill=white] (1,0.5) circle (0.2);
\node[cyan] at (1,0.5)  {$-$};

     \draw[fill=white] (2.5,1) circle (0.2);
\node[cyan] at (2.5,1)  {$+$};

     \draw[fill=white] (2,0.5) circle (0.2);
\node[cyan] at (2,0.5)  {$+$};

 \node at(1.5,-1) {$(- -,+ +)$};
   
    \end{scope}
    
  \begin{scope}[shift={(1,0)}]
    \draw[line width=0.7pt] (0,0)--(1,0);
    \draw[line width=0.7pt] (0,0)--(0,1);
    \draw[line width=0.7pt] (0,1)--(0,2);
    \draw[line width=0.7pt] (0,2)--(0,3);
    \draw[line width=0.7pt] (1,0)--(1,3);
    \draw[line width=0.7pt] (0,3)--(2,3);
    \draw[line width=0.7pt] (0,1)--(1,1);
    \draw[line width=0.7pt] (0,2)--(1,2);
    \draw[line width=0.7pt] (1,2)--(2,2);
    \draw[line width=0.7pt] (2,2)--(2,3);
    
         \draw[fill=white] (0.5,0) circle (0.2);
\node[cyan] at (0.5, 0)  {$-$};

     \draw[fill=white] (0.5,1) circle (0.2);
\node[cyan] at (0.5, 1)  {$+$};

     \draw[fill=white] (0.5,2) circle (0.2);
\node[cyan] at (0.5, 2)  {$-$};

     \draw[fill=white] (1,2.5) circle (0.2);
\node[cyan] at (1,2.5)  {$-$};

     \draw[fill=white] (2,2.5) circle (0.2);
\node[cyan] at (2,2.5)  {$+$};

    \node at(0.9,-1) {$(-,+,- -,+)$};
    \end{scope}

    \begin{scope}[shift={(6,0)}]
    \draw[line width=0.7pt] (0,0)-- (1,0);
    \draw[line width=0.7pt] (1,0)-- (2,0);
    \draw[line width=0.7pt] (0,0)-- (0,1);
    \draw[line width=0.7pt] (0,1)--(2,1)--(2,0);
    \draw[line width=0.7pt] (1,1)--(1,0);
    
            \draw[fill=white] (0.5,0) circle (0.2);
\node[cyan] at (0.5, 0)  {$-$};

     \draw[fill=white] (1,0.5) circle (0.2);
\node[cyan] at (1,0.5)  {$-$};

    \node at(2,-1) {$(- -,+ +,- -, + +)$};
    \end{scope}

    \begin{scope}[shift={(8,0)}]
    \draw[line width=0.7pt] (0,0)--(1,0);
    \draw[line width=0.7pt] (0,0)--(0,1);
    \draw[line width=0.7pt] (0,1)--(0,2);
    \draw[line width=0.7pt] (0,2)--(0,3);
    \draw[line width=0.7pt] (1,0)--(1,3);
    \draw[line width=0.7pt] (0,3)--(2,3);
    \draw[line width=0.7pt] (0,1)--(1,1);
    \draw[line width=0.7pt] (0,2)--(1,2);
    \draw[line width=0.7pt] (1,2)--(2,2);
    \draw[line width=0.7pt] (2,2)--(2,3);
     \end{scope}
     \begin{scope}[shift={(10,2)}]
    \draw[line width=0.7pt] (0,0)--(1,0);
    \draw[line width=0.7pt] (0,0)--(0,1)--(1,1)--(1,0);
    
         \draw[fill=white] (-2,-1.5) circle (0.2);
\node[cyan] at (-2,-1.5)  {$+$};
    
                \draw[fill=white] (-1.5,0) circle (0.2);
\node[cyan] at (-1.5, 0)  {$-$};

                \draw[fill=white] (-1.5,-1) circle (0.2);
\node[cyan] at (-1.5, -1)  {$+$};

     \draw[fill=white] (-1,0.5) circle (0.2);
\node[cyan] at (-1,0.5)  {$-$};

    \draw[fill=white] (0,0.5) circle (0.2);
\node[cyan] at (0,0.5)  {$+$};

    \draw[fill=white] (0.5,1) circle (0.2);
\node[cyan] at (0.5,1)  {$+$};

      \end{scope}

	\end{tikzpicture}
	\caption{}
\label{SignSec}
\end{figure} 
\noindent
For more details; see~\cite{CR,CS}.

\begin{rem}
The sequence ends up with the north, or east edge of the last box. 
The choice of the last edge is determined by the choice of even or odd
number of coefficients; see Figure~\ref{SignSec} where we choose even~$k$.
However, the choice of the last edge does not change the snake graph.
\end{rem}

Clearly, every snake graph is determined by the corresponding sequence of signs.

\begin{defn}
Given a rational $\frac{r}{s}\geq1$, 
one associates a snake graph $\G_{\frac{r}{s}}$ to $\frac{r}{s}$
in such a way that the sequence of signs of~$\G_{\frac{r}{s}}$ coincides
with the sequence of coefficients in the continued fraction
$\frac{r}{s}=[a_1,a_2,\ldots,a_{2m}]$:
$$
\Big(
\underbrace{- \cdots -}_{a_1}\,,\;
\underbrace{+ \cdots +}_{a_2}\,,\;
\underbrace{- \cdots -}_{a_3}\,,\;
\ldots\,,\;
\underbrace{+ \cdots +}_{a_{2m}}
\Big)
\qquad
\Big(
\underbrace{- \cdots -}_{a_1}\,,\;
\underbrace{+ \cdots +}_{a_2}\,,\;
\underbrace{- \cdots -}_{a_3}\,,\;
\ldots\,,\;
\underbrace{- \cdots -}_{a_{2m+1}}
\Big).
$$
\end{defn}

\begin{ex}
\label{SimpEx}
The snake graphs in Figure~\ref{SignSec} represent the rationals
(already discussed in Example~\ref{FirstEx})
$$
\frac{5}{2}=[2,2]=[2,1,1],
\qquad\qquad
\frac{7}{4}=[1,1,2,1]=[1,1,3],
\qquad\hbox{and}\qquad
\frac{29}{12}=[2,2,2,2]=[2,2,2,1,1],
$$ 
respectively.
\end{ex}

The result of~\cite{CS} states that
{\it  the numerator $r$ of a rational number $\frac{r}{s}$ is equal to
the number of perfect matchings of~$\G_{\frac{r}{s}}$}.
To calculate the denominator~$s$, one needs to consider the continued fraction with
first coefficient removed.
More precisely, take
$\frac{r'}{s'}:=[a_2,a_3,\ldots,a_{k}]$
 instead of $\frac{r}{s}=[a_1,a_2,\ldots,a_{k}]$.
The denominator $s$  is then equal to
the number of perfect matchings of~$\G_{\frac{r'}{s'}}$.

\subsection{The ladders}
To better explain the correspondence between snake graphs and rationals, 
let us describe the fragments of snake graphs 
that represent fragments of the sign sequence with constant signs.
They constitute ``ladders'',
as represented in Figure~\ref{LadFig}.
\begin{figure}[H]
	\centering
\begin{tikzpicture}[scale=0.7]
    
  \begin{scope}[shift={(-4,0)}]
           \draw[line width=0.7pt] (0,1)-- (0,2);
    \draw[line width=2pt,blue] (0,2)-- (0,3);
    \draw[line width=0.7pt] (1,1)--(1,3);
    \draw[line width=0.7pt] (0,3)--(2,3);
    \draw[line width=2pt,blue] (0,1)--(1,1);
    \draw[line width=0.7pt] (0,2)--(1,2);
    \draw[line width=2pt,blue] (1,2)-- (2,2);
    \draw[line width=0.7pt] (2,2)--(2,3);
    
     \draw[line width=2pt,blue] (-1,1)-- (-1,2);
     \draw[line width=0.7pt] (-1,2)-- (0,2);
 \draw[line width=0.7pt] (-1,1)-- (0,1);
  
 \draw[line width=0.7pt] (2,2)--(2,4);
\draw[line width=0.7pt] (2,3)--(3,3);
\draw[line width=2pt,blue] (1,3)-- (1,4);
 \draw[line width=0.7pt] (1,4)--(3,4);
  \draw[line width=2pt,blue] (2,3)-- (3,3);
  \draw[line width=0.7pt] (3,3)--(3,4);

\draw[line width=0.7pt] (3,4)--(3,5);
\draw[line width=2pt,blue] (2,4)--(2,5);
\draw[line width=0.7pt] (2,5)--(3,5);

      \draw[fill=white] (-0.5,1) circle (0.2);
\node[cyan] at (-0.5, 1)  {$-$};

     \draw[fill=white] (0,1.5) circle (0.2);
\node[cyan] at (0,1.5)  {$-$};

      \draw[fill=white] (0.5,2) circle (0.2);
\node[cyan] at (0.5, 2)  {$-$};

    \draw[fill=white] (1,2.5) circle (0.2);
\node[cyan] at (1,2.5)  {$-$};

      \draw[fill=white] (1.5,3) circle (0.2);
\node[cyan] at (1.5, 3)  {$-$};

    \draw[fill=white] (2,3.5) circle (0.2);
\node[cyan] at (2,3.5)  {$-$};

      \draw[fill=white] (2.5,4) circle (0.2);
\node[cyan] at (2.5, 4)  {$-$};

    \draw[fill=white] (3,4.5) circle (0.2);
\node[cyan] at (3,4.5)  {$-$};

    \end{scope}

  \begin{scope}[shift={(4,0)}]
           \draw[line width=0.7pt] (0,1)-- (0,2);
    \draw[line width=2pt,blue] (0,2)-- (0,3);
    \draw[line width=0.7pt] (1,1)--(1,3);
    \draw[line width=0.7pt] (0,3)--(2,3);
    \draw[line width=2pt,blue] (0,1)--(1,1);
    \draw[line width=0.7pt] (0,2)--(1,2);
    \draw[line width=2pt,blue] (1,2)-- (2,2);
    \draw[line width=0.7pt] (2,2)--(2,3);
    
     \draw[line width=2pt,blue] (-1,1)-- (-1,2);
     \draw[line width=0.7pt] (-1,2)-- (0,2);
 \draw[line width=0.7pt] (-1,1)-- (0,1);
  
 \draw[line width=0.7pt] (2,2)--(2,3);
 \draw[line width=2pt,red] (2,2)-- (3,2);
   \draw[line width=0.7pt] (3,2)--(3,4);
\draw[line width=0.7pt] (2,3)--(3,3);
\draw[line width=2pt,red] (2,3)-- (2,4);
 \draw[line width=0.7pt] (2,4)--(4,4);
  \draw[line width=2pt,red] (3,3)-- (4,3);
  \draw[line width=0.7pt] (4,3)--(4,4);

\draw[line width=0.7pt] (4,4)--(4,5);
\draw[line width=0.7pt] (4,4)--(4,5);
\draw[line width=2pt,red] (3,4)--(3,5);
\draw[line width=0.7pt] (3,5)--(4,5);
\draw[line width=2pt,red] (4,4)--(5,4);
\draw[line width=0.7pt] (4,5)--(5,5);
\draw[line width=0.7pt] (5,4)--(5,5);

     \draw[fill=white] (-0.5,1) circle (0.2);
\node[cyan] at (-0.5, 1)  {$-$};

     \draw[fill=white] (0,1.5) circle (0.2);
\node[cyan] at (0,1.5)  {$-$};

      \draw[fill=white] (0.5,2) circle (0.2);
\node[cyan] at (0.5, 2)  {$-$};

    \draw[fill=white] (1,2.5) circle (0.2);
\node[cyan] at (1,2.5)  {$-$};

    \draw[fill=white] (2,2.5) circle (0.2);
\node[cyan] at (2,2.5)  {$+$};

     \draw[fill=white] (2.5,3) circle (0.2);
\node[cyan] at (2.5, 3)  {$+$};

    \draw[fill=white] (3,3.5) circle (0.2);
\node[cyan] at (3,3.5)  {$+$};

     \draw[fill=white] (3.5,4) circle (0.2);
\node[cyan] at (3.5, 4)  {$+$};

    \draw[fill=white] (4,4.5) circle (0.2);
\node[cyan] at (4,4.5)  {$+$};

     \draw[fill=white] (4.5,5) circle (0.2);
\node[cyan] at (4.5, 5)  {$+$};

    \end{scope}
       
	\end{tikzpicture}
\caption{\hskip 0.2cm a) the snake for $8$; \hskip 1cm b) attaching two ladders.}
\label{LadFig}
\end{figure}
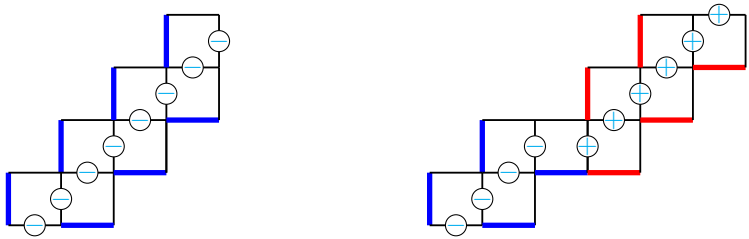 

\noindent
Each ladder has the same coloring, 
all of its boxes have a blue edge, or a red edge.

\section{Proof of the main theorem}\label{ProofSec}

In this section we prove (a slightly reformulated version of) Theorem~\ref{SnakeThm}
stating that $q$-rational numbers count perfect matchings of
snake graphs with weighted edges.

\subsection{Detailed statement of the main theorem}\label{DetStat}

Consider a rational $\frac{r}{s}\geq1$, and chose its continued fraction expansion with even numbers of terms:
$$
\frac{r}{s}=[a_1,a_2,\ldots,a_{2m}].
$$
This can always be done thanks to~\eqref{Ambigo}.
Define the integer $n_{\frac{r}{s}}$ as the sum of the even coefficients of the continued fraction decreased by~$1$:
\begin{equation}
\label{Len}
n=n_{\frac{r}{s}}:=a_2+a_4+\cdots+a_{2m}-1.
\end{equation}

The following statement is a more detailed reformulation of our main result
announced in the introduction as Theorem~\ref{SnakeThm}.

\begin{thm}
\label{SnakeThmBis}
For every $q$-rational $\left[\frac{r}{s}\right]_q=\frac{\cR(q)}{\Sc(q)}$,
such that $\frac{r}{s}\geq1$, the polynomial $\cR$ in the numerator is given by
\begin{equation}
\label{MainEq}
\cR(q)=q^{n}\cM_q(\G_{\frac{r}{s}}),
\end{equation}
where $\G_{\frac{r}{s}}$ is the snake graph corresponding to~$\frac{r}{s}$,
and where $n$ is as in~\eqref{Len}.
\end{thm}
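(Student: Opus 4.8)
The plan is to prove the identity $\cR(q)=q^{n}\cM_q(\G_{\frac{r}{s}})$ by matching two recursive descriptions: the recursion for the numerator polynomial $\cR$ coming from the matrix product~\eqref{Matqa}, and the recursion for the weighted matching generating function $\cM_q(\G_{\frac{r}{s}})$ coming from the well-known skein/band relations for snake graphs. The cleanest route is induction on the number of ladders (equivalently on $m$, or on the total number of boxes). The base case will be a single ladder, where I will compute $\cM_q$ directly and compare it to the $q$-integer $[a_1]_q$ produced by~\eqref{EQInt}, confirming~\eqref{MainEq} with the correct power of $q$.

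\medskip

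First I would recall the standard recursive structure of perfect matchings on snake graphs: if $G_k$ is the snake graph with $k$ boxes, then deleting/contracting at the last box (or the last ladder) expresses the set of perfect matchings of $G_k$ in terms of those of $G_{k-1}$ and $G_{k-2}$. Concretely, conditioning on whether the final edge of the graph is used by the matching gives a relation of the form $\cM_q(G_k)=w\cdot\cM_q(G_{k-1})+w'\cdot\cM_q(G_{k-2})$, where the weights $w,w'$ are monomials in $q$ determined by the coloring rule of Definition~\ref{MainDefn}. The key combinatorial input, guaranteed by part~(b) of the Remark after Definition~\ref{MainDefn}, is that every elementary box carries exactly one colored edge; this makes the weight bookkeeping uniform across a ladder and lets me track how each red or blue edge contributes $q^{\pm1}$. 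I would then show that, after multiplying by $q^{n}$, this matching recursion is \emph{identical} to the three-term recursion satisfied by the continuants $\cR(q)=K(c_1,\ldots,c_k)_q$ defined by the determinant~\eqref{K+Eq}, i.e.\ the tridiagonal continuant expansion along its last row.

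\medskip

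The heart of the argument is therefore a precise dictionary between the two recursions. On the algebraic side, expanding the continuant~\eqref{K+Eq} along the last row yields $K(c_1,\ldots,c_k)_q=[a_k]_{q^{\pm1}}K(c_1,\ldots,c_{k-1})_q+q^{\pm a_{k-1}}K(c_1,\ldots,c_{k-2})_q$, where the signs of the exponents alternate with the parity of $k$ exactly as in~\eqref{qa}. On the combinatorial side, attaching a ladder of length $a_k$ (a run of $a_k$ boxes of a single color, either all blue or all red, per the ladder description in Figure~\ref{LadFig}) multiplies the generating function by the appropriate $q$-integer $[a_k]_{q^{\pm1}}$ coming from summing over how the matching traverses the ladder, plus a lower-order term from the interface box. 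I would verify that the power of $q$ appearing in the ladder weight matches the exponent in the continuant term-by-term, and that the color of the ladder (blue giving weight $q$, red giving $q^{-1}$) is exactly what flips the sign of the exponent from $q^{a}$ to $q^{-a}$ between consecutive ladders — this is the combinatorial shadow of the inversion $q\mapsto q^{-1}$ in the even terms of~\eqref{qa}.

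\medskip

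The main obstacle will be getting the \emph{global} power $q^{n}$ exactly right, where $n=a_2+a_4+\cdots+a_{2m}-1$. Each individual ladder recursion is only correct up to an overall monomial factor, and these factors accumulate; the delicate point is checking that the total accumulated discrepancy between the unnormalized matching count $\cM_q(\G_{\frac{r}{s}})$ and the continuant $\cR(q)$ is precisely $q^{n}$, and in particular depends only on the even (red-ladder) coefficients. I expect this to follow from the asymmetry built into Definition~\ref{MainDefn}, namely that the first column of the grid is left uncolored, so that the very first ladder (the $a_1$ run) contributes no negative powers while each subsequent even ladder contributes a shift. I would pin this down by tracking the minimal-degree monomial of $\cM_q$ through the induction: since $\cM_q$ is a Laurent polynomial whose lowest term corresponds to the ``all-red'' style matching, its lowest degree should be exactly $-n$, so that $q^{n}\cM_q$ becomes the genuine polynomial $\cR$ with constant term $1$, matching the normalization that $\cR$ is monic with positive integer coefficients and $\cR(0)=1$. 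Verifying this degree computation, rather than the recursion itself, is where the real care is needed.
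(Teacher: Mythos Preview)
Your proposal is correct and follows the same overall strategy as the paper: induction, matching a combinatorial recursion for $\cM_q$ against an algebraic recursion for $\cR$, and checking that the discrepancy is exactly the monomial $q^{n}$. The one substantive difference is granularity. You propose inducting ladder-by-ladder, expanding the continuant~\eqref{K+Eq} along its last row to get a step involving a factor $[a_k]_{q^{\pm1}}$, and matching this with a ladder-attachment recursion for $\cM_q$. The paper instead inducts \emph{box-by-box} on $N=a_1+\cdots+a_k$: on the algebraic side it multiplies the matrix product~\eqref{Matqa} or~\eqref{MatqaOdd} by a single $L_q$ or $R_q$, giving the simple two-term relations $\cR=q\cR'+\tilde\cR$ (even case) and $\cR=\cR'+q^{a_{2m+1}}\tilde\cR$ (odd case); on the combinatorial side it conditions on whether the outermost edge of the new box lies in the matching, yielding $\cM_q(\G_{\frac{r}{s}})=\cM_q(\G_{\frac{r'}{s'}})+q^{\mp a_k}\cM_q(\G_{\frac{\tilde r}{\tilde s}})$. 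The box-level approach buys you cleaner bookkeeping: each step introduces a single monomial weight rather than a $q$-integer, so the verification that the global shift is $q^{n}$ becomes a one-line comparison of exponents rather than the degree-tracking argument you outline in your last paragraph. Your ladder version is just the $a_k$-fold iterate of this, so it works, but it is where you yourself anticipate the ``real care'' being needed; the paper sidesteps that by never aggregating a ladder at once.
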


Note that we need the continued fraction with even number of coefficients 
to define the coefficient~$n$, but
paradoxically, the proof is not based on the even continued fraction.
Let $\frac{r}{s}\geq1$ be a rational, consider its continued fraction expansion
$$
\frac{r}{s}=[a_1,a_2,\ldots,a_{k-1},a_{k}]
$$
in which, using the ambiguity~\eqref{Ambigo}, we can assume that $a_k\geq2$.
The proof of Theorem~\ref{SnakeThmBis}
will use the induction on the sum of coefficients
$$
N=a_1+a_2+\cdots+a_{k-1}+a_{k},
$$
or equivalently on the number of boxes in the snake graph.

\subsection{Two inductive cases}

Suppose that a rational $\frac{r'}{s'}\geq1$ has the sum of coefficients $N'=N-1$.
By induction assumption we assume that the statement of Theorem~\ref{SnakeThmBis}
holds for~$\frac{r'}{s'}\geq1$.

There are two different cases.
\begin{enumerate}
\item
The continued fraction expansion of $\frac{r'}{s'}$ is
\begin{equation}
\label{OneEq}
\frac{r'}{s'}=\left[a_1,a_2,\ldots,a_{2m-1},\,a_{2m}-1\right],
\end{equation}
when $k=2m$.

\item
The continued fraction expansion of $\frac{r'}{s'}$  is 
\begin{equation}
\label{TwoEq}
\frac{r'}{s'}=\left[a_1,a_2,\ldots,a_{2m+1}-1\right],
\end{equation}
when $k=2m+1$.

\end{enumerate}

We need to consider each of these cases separately.

\subsection{Case $1$}

The snake graph $\G_{\frac{r}{s}}$ is obtained from $\G_{\frac{r'}{s'}}$ by adding a box,
as illustrated in Figure~\ref{SnakeProof1}.
\begin{figure}[H]
	\centering
\begin{tikzpicture}[scale=0.6]
    
  \begin{scope}[shift={(-4,0)}]
           \draw[line width=0.7pt] (0,1)-- (0,2);
    \draw[line width=2pt,blue] (0,2)-- (0,3);
    \draw[line width=0.7pt] (1,1)--(1,3);
    \draw[line width=0.7pt] (0,3)--(2,3);
    \draw[line width=2pt,blue] (0,1)--(1,1);
    \draw[line width=0.7pt] (0,2)--(1,2);
    \draw[line width=2pt,blue] (1,2)-- (2,2);
    \draw[line width=0.7pt] (2,2)--(2,3);
    
     \draw[line width=2pt,blue] (-1,1)-- (-1,2);
     \draw[line width=0.7pt] (-1,2)-- (0,2);
 \draw[line width=0.7pt] (-1,1)-- (0,1);
     \draw[dashed] (-1,1)--(-1,0);
 \draw[dashed] (0,1)--(0,0);
 
 \draw[line width=0.7pt] (2,2)--(2,3);
 \draw[line width=2pt,red] (2,2)-- (3,2);
   \draw[line width=0.7pt] (3,2)--(3,4);
\draw[line width=0.7pt] (2,3)--(3,3);
\draw[line width=2pt,red] (2,3)-- (2,4);
 \draw[line width=0.7pt] (2,4)--(4,4);
  \draw[line width=2pt,red] (3,3)-- (4,3);
  \draw[line width=0.7pt] (4,3)--(4,4);

\draw[line width=0.7pt] (4,4.3)--(4,5.3);
\draw[line width=0.7pt] (3,4.3)--(4,4.3);
\draw[line width=2pt,red] (3,4.3)--(3,5.3);
\draw[line width=0.7pt] (3,5.3)--(4,5.3);

    \end{scope}

  \begin{scope}[shift={(4,0)}]
           \draw[line width=0.7pt] (0,1)-- (0,2);
    \draw[line width=2pt,blue] (0,2)-- (0,3);
    \draw[line width=0.7pt] (1,1)--(1,3);
    \draw[line width=0.7pt] (0,3)--(2,3);
    \draw[line width=2pt,blue] (0,1)--(1,1);
    \draw[line width=0.7pt] (0,2)--(1,2);
    \draw[line width=2pt,blue] (1,2)-- (2,2);
    \draw[line width=0.7pt] (2,2)--(2,3);
    
     \draw[line width=2pt,blue] (-1,1)-- (-1,2);
     \draw[line width=0.7pt] (-1,2)-- (0,2);
 \draw[line width=0.7pt] (-1,1)-- (0,1);
     \draw[dashed] (-1,1)--(-1,0);
 \draw[dashed] (0,1)--(0,0);
 
 \draw[line width=0.7pt] (2,2)--(2,3);
 \draw[line width=2pt,red] (2,2)-- (3,2);
   \draw[line width=0.7pt] (3,2)--(3,4);
\draw[line width=0.7pt] (2,3)--(3,3);
\draw[line width=2pt,red] (2,3)-- (2,4);
 \draw[line width=0.7pt] (2,4)--(4,4);
  \draw[line width=2pt,red] (3,3)-- (4,3);
  \draw[line width=0.7pt] (4,3)--(4,4);

\draw[line width=0.7pt] (4,4)--(4,5);
\draw[line width=0.7pt] (4.3,4)--(4.3,5);
\draw[line width=2pt,red] (3,4)--(3,5);
\draw[line width=0.7pt] (3,5)--(4,5);
\draw[line width=2pt,red] (4.3,4)--(5.3,4);
\draw[line width=0.7pt] (4.3,5)--(5.3,5);
\draw[line width=0.7pt] (5.3,4)--(5.3,5);

    \end{scope}

	\end{tikzpicture}
\caption{Attachment of a box to $\G_{\frac{r'}{s'}}$: Case $1$.}
\label{SnakeProof1}
\end{figure}
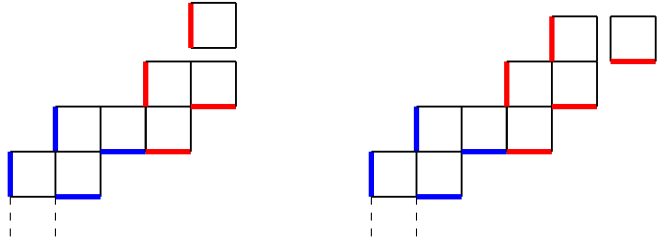 

\begin{lem}
\label{Case1Lem}
The weighted number of perfect matchings of $\G_{\frac{r}{s}}$ is given by
\begin{equation}
\label{AddEq}
\cM_q(\G_{\frac{r}{s}})=
\cM_q(\G_{\frac{r'}{s'}})+q^{-a_{2m}}\cM_q(\G_{\frac{\tilde{r}}{\tilde{s}}}),
\end{equation}
where $\frac{r'}{s'}$ is as in~\eqref{OneEq},
and where $\frac{\tilde{r}}{\tilde{s}}$ is the rational that has the continued fraction expansion
$$
\frac{\tilde{r}}{\tilde{s}}=[a_1,a_2,\ldots,a_{2m-1}-1].
$$
\end{lem}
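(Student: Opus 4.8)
The plan is to prove~\eqref{AddEq} by a direct, weight-tracking decomposition of the perfect matchings of $\G_{\frac{r}{s}}$ according to how they cover the single box $B$ that is adjoined to $\G_{\frac{r'}{s'}}$ to form $\G_{\frac{r}{s}}$, namely the top box of the last ($2m$-th, $+$-signed, hence red) ladder. Write $s=u_1u_2$ for the edge along which $B$ is glued to the rest of the graph; it is interior, so uncolored. The box $B$ adds two new vertices $w_1,w_2$ together with the three edges $e_1=u_1w_1$, $e_2=u_2w_2$ and the outer edge $e_3=w_1w_2$. Since $s$ is interior and $e_3$ lies on the outer (northern or eastern) boundary, both are uncolored, so Remark~(b) forces the unique colored edge of $B$ to be one of $e_1,e_2$; as the last ladder is red it carries weight $q^{-1}$, while $e_3$ has weight~$1$. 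In any perfect matching $\mathfrak{m}$ the vertices $w_1,w_2$ must be covered, which yields the dichotomy: either $e_3\in\mathfrak{m}$, or $e_3\notin\mathfrak{m}$ and then necessarily $e_1,e_2\in\mathfrak{m}$.

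First I would treat the case $e_3\in\mathfrak{m}$. Here $w_1$ and $w_2$ are matched to each other, and deleting them leaves exactly the snake graph $\G_{\frac{r'}{s'}}$ obtained by removing the top box of the last ladder (this is legitimate because $a_{2m}\geq2$, so the ladder does not collapse). Restriction is then a weight-preserving bijection between these matchings and the perfect matchings of $\G_{\frac{r'}{s'}}$, and since $\mathrm{wt}(e_3)=1$, this family contributes precisely $\cM_q(\G_{\frac{r'}{s'}})$ to $\cM_q(\G_{\frac{r}{s}})$.

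The heart of the argument is the case $e_3\notin\mathfrak{m}$, where I claim a forced domino cascade propagates down the entire red ladder. Once $e_1,e_2\in\mathfrak{m}$ the vertices $u_1,u_2$ are covered; examining the next box down, one of its two remaining corners is a degree-two vertex whose only free neighbour is the other remaining corner, so the colored edge of that box is forced into $\mathfrak{m}$; this in turn exposes the next box, and the step repeats. Because a constant-sign ladder is a zigzag staircase (Section~\ref{DefSG}), at each step the unique forced edge is exactly the western or southern, hence red, colored edge of that box, so exactly one factor $q^{-1}$ is collected per box, for total weight $q^{-a_{2m}}$ over the $a_{2m}$ boxes of the ladder. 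After the cascade every vertex of the last ladder is covered, and what remains is the snake graph with the whole last ladder deleted; since the bottom box of that ladder is shared with the penultimate one, this deletion also shortens the $(2m-1)$-st ladder by one box, producing exactly $\G_{\frac{\tilde{r}}{\tilde{s}}}$ with $\frac{\tilde{r}}{\tilde{s}}=[a_1,\ldots,a_{2m-1}-1]$. Hence this family contributes $q^{-a_{2m}}\cM_q(\G_{\frac{\tilde{r}}{\tilde{s}}})$, and summing the two disjoint cases gives~\eqref{AddEq}.

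The main obstacle is the rigorous verification of this cascade. One must check, using the precise zigzag geometry of a constant-sign ladder and the degree-two corner vertices it produces, that the matching on the whole ladder is genuinely \emph{forced} (so that restriction is a bijection, not merely a surjection) and that the forced edge of each box is exactly its colored edge, so the accumulated weight is $q^{-a_{2m}}$ and not some other power of~$q$. Equally delicate is confirming that peeling off the shared bottom box reduces $a_{2m-1}$ by exactly one, matching the continued fraction of $\frac{\tilde{r}}{\tilde{s}}$; I would make both points precise by tracking the sign sequence of Section~\ref{DefSG} box by box along the ladder.
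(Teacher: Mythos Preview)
Your proposal is correct and follows essentially the same approach as the paper's proof: both split the perfect matchings of $\G_{\frac{r}{s}}$ according to whether the outer edge of the newly attached box lies in the matching, identify the first family bijectively with matchings of $\G_{\frac{r'}{s'}}$, and argue that in the second family the matching is forced on the entire last (red) ladder, contributing the weight factor $q^{-a_{2m}}$ and leaving precisely $\G_{\frac{\tilde r}{\tilde s}}$. The paper states the forcing step in a single sentence (``such a perfect matching is completely determined on the last ladder of $\G_{\frac{r}{s}}$''), whereas you spell out the domino cascade and the box-by-box weight count; your added detail is exactly what a careful reader would supply, and your identification of the verification of the cascade (uniqueness of the forced edges and the bookkeeping $a_{2m-1}\mapsto a_{2m-1}-1$) as the main point to check is apt.
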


\begin{proof}
The graph $\G_{\frac{r}{s}}$ has one edge (the exterior edge of the attached box)
that has no common vertices with $\G_{\frac{r'}{s'}}$.
The perfect matchings of $\G_{\frac{r}{s}}$ containing this edge are identified with
perfect matchings of $\G_{\frac{r'}{s'}}$, 
the number of such perfect matchings is $\cM_q(\G_{\frac{r'}{s'}})$.
This constitutes the first summand in~\eqref{AddEq}.
Similarly, the number of prefect matchings of $\G_{\frac{r}{s}}$ 
that do not contain the edge exterior to $\G_{\frac{r'}{s'}}$ is equal to
$q^{-a_{2m}}\cM_q(\G_{\frac{\tilde{r}}{\tilde{s}}})$.
Indeed, such a perfect matching is completely determined on the last ladder of $\G_{\frac{r}{s}}$.

Hence the lemma.
\end{proof}

Let us deduce Theorem~\ref{SnakeThmBis}, Case~1 from Lemma~\ref{Case1Lem}.
By~\eqref{Matqa}, the matrix of the $q$-deformed continued fraction of $\left[\frac{r}{s}\right]_q$ is
connected with that of $\left[\frac{r'}{s'}\right]_q$ in the following way
$$
\begin{pmatrix}
q\cR(q)&\tilde{\cR}(q)\\[2pt]
q\Sc(q)&\tilde{\Sc}(q)
\end{pmatrix}
=
\begin{pmatrix}
q\cR'(q)&\tilde{\cR}(q)\\[2pt]
q\Sc'(q)&\tilde{\Sc}(q)
\end{pmatrix}L_q,
$$
where $L_q$ is as in~\eqref{RS}.
Therefore, the numerator of the $q$-rational $\left[\frac{r}{s}\right]_q$ is given by
$$
\cR(q)=q\cR'(q)+\tilde{\cR}(q).
$$
By induction assumption, $\cR'(q)$ and $\tilde{\cR}(q)$ are given by Theorem~\ref{SnakeThmBis}.
More precisely,
\begin{eqnarray*}
\cR'(q)&=&q^{a_2+a_4+\cdots+a_{2m}-2}\cM_q(\G_{\frac{r'}{s'}}),\\
\tilde{\cR}(q)&=&q^{a_2+a_4+\cdots+a_{2m-2}-1}\cM_q(\G_{\frac{\tilde{r}}{\tilde{s}}}).
\end{eqnarray*}
We finally obtain
$$
\cR(q)=q^n\cM_q(\G_{\frac{r'}{s'}})+q^{n-a_{2m}}\cM_q(\G_{\frac{\tilde{r}}{\tilde{s}}}),
$$
where $n=a_2+a_4+\cdots+a_{2m}-1$.
Lemma~\ref{Case1Lem} then implies that $\cR(q)=q^n\cM_q(\G_{\frac{r}{s}})$
as stated in Theorem~\ref{SnakeThmBis}.

\subsection{Case $2$}

The snake graph $\G_{\frac{r}{s}}$ is obtained from $\G_{\frac{r'}{s'}}$ by adding a box,
as illustrated in Figure~\ref{SnakeProof2}.
\begin{figure}[H]
	\centering
\begin{tikzpicture}[scale=0.6]
    
  \begin{scope}[shift={(-4,0)}]
           \draw[line width=0.7pt] (0,1)-- (0,2);
    \draw[line width=2pt,red] (0,2)-- (0,3);
    \draw[line width=0.7pt] (1,1)--(1,3);
    \draw[line width=0.7pt] (0,3)--(2,3);
    \draw[line width=2pt,red] (0,1)--(1,1);
    \draw[line width=0.7pt] (0,2)--(1,2);
    \draw[line width=2pt,red] (1,2)-- (2,2);
    \draw[line width=0.7pt] (2,2)--(2,3);
    
     \draw[line width=2pt,red] (-1,1)-- (-1,2);
     \draw[line width=0.7pt] (-1,2)-- (0,2);
 \draw[line width=0.7pt] (-1,1)-- (0,1);
     \draw[dashed] (-1,1)--(-1,0);
 \draw[dashed] (0,1)--(0,0);
 
 \draw[line width=0.7pt] (2,2)--(2,3);
 \draw[line width=2pt,blue] (2,2)-- (3,2);
   \draw[line width=0.7pt] (3,2)--(3,4);
\draw[line width=0.7pt] (2,3)--(3,3);
\draw[line width=2pt,blue] (2,3)-- (2,4);
 \draw[line width=0.7pt] (2,4)--(4,4);
  \draw[line width=2pt,blue] (3,3)-- (4,3);
  \draw[line width=0.7pt] (4,3)--(4,4);

\draw[line width=0.7pt] (4,4.3)--(4,5.3);
\draw[line width=0.7pt] (3,4.3)--(4,4.3);
\draw[line width=2pt,blue] (3,4.3)--(3,5.3);
\draw[line width=0.7pt] (3,5.3)--(4,5.3);

    \end{scope}

  \begin{scope}[shift={(4,0)}]
           \draw[line width=0.7pt] (0,1)-- (0,2);
    \draw[line width=2pt,red] (0,2)-- (0,3);
    \draw[line width=0.7pt] (1,1)--(1,3);
    \draw[line width=0.7pt] (0,3)--(2,3);
    \draw[line width=2pt,red] (0,1)--(1,1);
    \draw[line width=0.7pt] (0,2)--(1,2);
    \draw[line width=2pt,red] (1,2)-- (2,2);
    \draw[line width=0.7pt] (2,2)--(2,3);
    
     \draw[line width=2pt,red] (-1,1)-- (-1,2);
     \draw[line width=0.7pt] (-1,2)-- (0,2);
 \draw[line width=0.7pt] (-1,1)-- (0,1);
     \draw[dashed] (-1,1)--(-1,0);
 \draw[dashed] (0,1)--(0,0);
 
 \draw[line width=0.7pt] (2,2)--(2,3);
 \draw[line width=2pt,blue] (2,2)-- (3,2);
   \draw[line width=0.7pt] (3,2)--(3,4);
\draw[line width=0.7pt] (2,3)--(3,3);
\draw[line width=2pt,blue] (2,3)-- (2,4);
 \draw[line width=0.7pt] (2,4)--(4,4);
  \draw[line width=2pt,blue] (3,3)-- (4,3);
  \draw[line width=0.7pt] (4,3)--(4,4);

\draw[line width=0.7pt] (4,4)--(4,5);
\draw[line width=0.7pt] (4.3,4)--(4.3,5);
\draw[line width=2pt,blue] (3,4)--(3,5);
\draw[line width=0.7pt] (3,5)--(4,5);
\draw[line width=2pt,blue] (4.3,4)--(5.3,4);
\draw[line width=0.7pt] (4.3,5)--(5.3,5);
\draw[line width=0.7pt] (5.3,4)--(5.3,5);

    \end{scope}

	\end{tikzpicture}
\caption{Attachment of a box to $\G_{\frac{r'}{s'}}$: Case $2$.}
\label{SnakeProof2}
\end{figure} 

\noindent
In a similar with the Case 1 way we obtain for the perfect matchings
$$
\cM_q(\G_{\frac{r}{s}})=
\cM_q(\G_{\frac{r'}{s'}})+q^{a_{2m+1}}\cM_q(\G_{\frac{\tilde{r}}{\tilde{s}}}).
$$

The matrix of the $q$-deformed continued fraction of $\left[\frac{r}{s}\right]_q$
in this case is as follows
$$
\begin{pmatrix}
q^{a_{2m+1}+1}\tilde{\cR}(q)&\cR(q)\\[2pt]
q^{a_{2m+1}+1}\tilde{\Sc}(q)&\Sc(q)
\end{pmatrix}
=
\begin{pmatrix}
q^{a_{2m+1}}\tilde{\cR}(q)&\cR'(q)\\[2pt]
q^{a_{2m+1}}\tilde{\Sc}(q)&\Sc'(q)
\end{pmatrix}R_q,
$$
from where we deduce
$$
\cR(q)=\cR'(q)+q^{a_{2m+1}}\tilde{R}(q).
$$
One obtains the desired result comparing this with the above formula for perfect matchingss.

Theorem~\ref{SnakeThmBis} is proved.

\subsection{A few examples}\label{ExSec}

For the simple examples from Figure~\ref{SignSec}, we have the following weighted snake graphs.
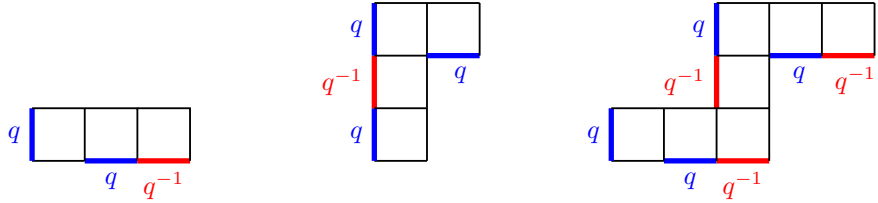
\begin{figure}[H]
	\centering
\begin{tikzpicture}[scale=0.7]
    \begin{scope}[shift={(-6,0)}]
    \draw[line width=0.7pt] (0,0)-- (1,0);
    \draw[line width=2pt,blue] (1,0)-- node[below]{$q$}(2,0);
    \draw[line width=2pt,blue] (0,0)-- node[left]{$q$}(0,1);
    \draw[line width=0.7pt] (0,1)--(2,1)--(2,0);
    \draw[line width=0.7pt] (1,1)--(1,0);
     \draw[line width=2pt,red] (2,0)-- node[below]{$q^{-1}$}(3,0);
    \draw[line width=0.7pt] (3,0)--(3,1)--(2,1);
    \end{scope}
    
  \begin{scope}[shift={(0.5,0)}]
    \draw[line width=0.7pt] (0,0)--(1,0);
    \draw[line width=2pt,blue] (0,0)-- node[left]{$q$}(0,1);
    \draw[line width=2pt,red] (0,1)--node[left]{$q^{-1}$}(0,2);
    \draw[line width=2pt,blue] (0,2)--node[left]{$q$}(0,3);
    \draw[line width=0.7pt] (1,0)--(1,3);
    \draw[line width=0.7pt] (0,3)--(2,3);
    \draw[line width=0.7pt] (0,1)--(1,1);
    \draw[line width=0.7pt] (0,2)--(1,2);
    \draw[line width=2pt,blue] (1,2)--node[below]{$\;\;q$}(2,2);
    \draw[line width=0.7pt] (2,2)--(2,3);
    \end{scope}

    \begin{scope}[shift={(5,0)}]
    \draw[line width=0.7pt] (0,0)-- (1,0);
    \draw[line width=2pt,blue] (1,0)-- node[below]{$q$}(2,0);
     \draw[line width=2pt,red] (2,0)-- node[below]{$q^{-1}$}(3,0);
    \draw[line width=2pt,blue] (0,0)-- node[left]{$q$}(0,1);
    \draw[line width=2pt,blue] (3,2)-- node[below]{$\;q$}(4,2);
     \draw[line width=2pt,red] (4,2)-- node[below]{$\;\;q^{-1}$}(5,2);
     \draw[line width=0.7pt] (1,0)-- (1,1);
    \draw[line width=0.7pt] (2,0)-- (2,1);
    \draw[line width=0.7pt] (3,0)-- (3,3);
    \draw[line width=2pt,red] (2,1)-- node[left]{$q^{-1}$}(2,2);
 \draw[line width=2pt,blue] (2,2)-- node[left]{$q$}(2,3);
    \draw[line width=0.7pt] (4,2)-- (4,3);
     \draw[line width=0.7pt] (5,2)-- (5,3);
      \draw[line width=0.7pt] (2,3)-- (5,3);
       \draw[line width=0.7pt] (2,2)-- (3,2);
       \draw[line width=0.7pt] (0,1)-- (3,1);
    \end{scope}
    
	\end{tikzpicture}
\caption{Colored snake graphs corresponding to~$\left[\frac{5}{2}\right]_q,\left[\frac{7}{4}\right]_q,$ and $\left[\frac{29}{12}\right]_q$.}
\label{SnakeColorEx}
\end{figure} 

One then checks that the corresponding weighted numbers of perfect matchings are as follows
\begin{eqnarray*}
q^{-1}+2+q+q^2 &=& q^{-1}\,\cR_{\frac{5}{2}}(q),\\
q^{-1}+1+2q+2q^2+q^3 &=& q^{-1}\,\cR_{\frac{7}{4}}(q),\\
q^{-3}+2q^{-2}+5q^{-1}+6+6q+5q^2+2q^3+q^4 &=& q^{-3}\,\cR_{\frac{29}{12}}(q),
\end{eqnarray*}
in full accordance with Theorem~\ref{SnakeThmBis}.

More examples will be given in Section~\ref{FiKast}.

\section{Applications: Kasteleyn determinants}\label{ApplSec}

As an application of Theorem~\ref{SnakeThm},
we are able to use the standard technique of dimer model theory.
In particular, we calculate the polynomials $\cR(q)$ and $\Sc(q)$ 
in terms of the determinants of Kasteleyn matrices.

\subsection{A brief summary on Kasteleyn determinants}

We recall here a basic fact from the dimer theory,
under the assumption that $\G$ is an oriented bipartite graph 
that has the same number of black and white vertices,
and the set $\G_1$ of simple (one valent) edges.

\begin{defn}

(a)
A graph is called {\it bipartite} if its vertices are colored in black and white and every edge has endpoints of opposite color.

(b)
Given a bipartite graph, it said to be equipped with a {\it Kasteleyn orientation} if 

\begin{enumerate}

\item
for every cycle of length $4m$ in the graph the number of arrows
from black vertices to white vertices is odd.

\item

for every cycle of length $4m+2$ in the graph the number of arrows
from black vertices to white vertices is even.

\end{enumerate}
\end{defn}

Let $b_1,b_2,\ldots,b_\ell$ and $w_1,w_2,\ldots,w_\ell$ be black and white vertices, respectively.
The corresponding Kasteleyn is the $\ell\times\ell$ matrix $M(\G)$ such that
$$
\begin{array}{rcl}
M_{ij} &=&
\left\{ 
\begin{array}{ll}
{\rm{wt}}(b_i,w_j), & (b_i,w_j)\in\G_1,\\[2pt]
-{\rm{wt}}(b_i,w_j), & (w_i,b_j)\in\G_1,\\[2pt]
0, & \hbox{$w_i$ and $b_j$ are disconnected}.
\end{array}
\right.
\end{array}
$$
The celebrated theorem, initially due to Kasteleyn~\cite{Kas} and strengthened by several authors,
states that the (modulus of the) determinant of the Kasteleyn matrix 
equals the weighted number of perfect matchings:
$$
\left|\det(M(\G))\right|=\cM(\G).
$$

\subsection{Choice of Kasteleyn orientation on snake graphs}

Every snake graph can be viewed as a bipartite graph in an obvious way; 
see Figures~\ref{BoxBip} and~\ref{SnakeBip}.
To make the choice unique, we assume that the down-left vertex is black.

Recall that every elementary box of a snake graph has exactly one colored edge
(as in Figure~\ref{SnakeBoxes}).
We will choose the following Kasteleyn orientation of snake graphs.

\begin{defn}
\label{KastOr}
(a)
Colored edges are oriented from black to white;

(b) uncolored edges are oriented from white to black.
\end{defn}

\noindent
With this convention the conditions of Kasteleyn orientation are clearly preserved.

There are exactly four different types of elementary boxes:

\begin{figure}[H]
	\centering
	\begin{tikzpicture}[scale=1]

	 \begin{scope}[shift={(-4,0)}]
        \draw[line width=2pt,blue] (0,2)-- (0,3);
    \draw[line width=0.7pt] (0,2)--(1,2);
    \draw[line width=0.7pt] (1,3)--(1,2);
    \draw[line width=0.7pt] (1,3)--(0,3);
    
     \draw[-{Stealth[length=2.5mm]}] (1,0)-- (0.07,0);
 \draw[-{Stealth[length=2.5mm]}] (0,0)-- (0,0.93);
\draw[-{Stealth[length=2.5mm]}] (0,1)--(0.93,1);
\draw[-{Stealth[length=2.5mm]}] (1,0)--(1,0.93);

    \draw[fill=black] (0,0) circle (0.08);
\draw[fill=white] (1,0) circle (0.08);
\draw[fill=black] (1,1) circle (0.08);
\draw[fill=white] (0,1) circle (0.08);

  \draw[fill=black] (0,2) circle (0.08);
\draw[fill=white] (1,2) circle (0.08);
\draw[fill=black] (1,3) circle (0.08);
\draw[fill=white] (0,3) circle (0.08);

\end{scope}

 \begin{scope}[shift={(2,0)}]
        \draw[line width=0.7pt] (0,2)-- (0,3);
    \draw[line width=2pt,red] (0,2)--(1,2);
    \draw[line width=0.7pt] (1,3)--(1,2);
    \draw[line width=0.7pt] (1,3)--(0,3);
    
   \draw[-{Stealth[length=2.5mm]}] (0,0)--(0.93,0);
\draw[-{Stealth[length=2.5mm]}] (0,1)--(0,0.07);
\draw[-{Stealth[length=2.5mm]}] (0,1)--(0.93,1);
\draw[-{Stealth[length=2.5mm]}] (1,0)--(1,0.93);

    \draw[fill=black] (0,0) circle (0.08);
\draw[fill=white] (1,0) circle (0.08);
\draw[fill=black] (1,1) circle (0.08);
\draw[fill=white] (0,1) circle (0.08);

  \draw[fill=black] (0,2) circle (0.08);
\draw[fill=white] (1,2) circle (0.08);
\draw[fill=black] (1,3) circle (0.08);
\draw[fill=white] (0,3) circle (0.08);

\end{scope}

	 \begin{scope}[shift={(-1,0)}]
        \draw[line width=2pt,red] (0,2)-- (0,3);
    \draw[line width=0.7pt] (0,2)--(1,2);
    \draw[line width=0.7pt] (1,3)--(1,2);
    \draw[line width=0.7pt] (1,3)--(0,3);
    
\draw[-{Stealth[length=2.5mm]}] (0,1)--(0,0.07);
\draw[-{Stealth[length=2.5mm]}] (1,1)--(1,0.07);
  \draw[-{Stealth[length=2.5mm]}] (1,1)-- (0.07,1);
   \draw[-{Stealth[length=2.5mm]}] (0,0)--(0.93,0);

    \draw[fill=white] (0,0) circle (0.08);
\draw[fill=black] (1,0) circle (0.08);
\draw[fill=white] (1,1) circle (0.08);
\draw[fill=black] (0,1) circle (0.08);

  \draw[fill=white] (0,2) circle (0.08);
\draw[fill=black] (1,2) circle (0.08);
\draw[fill=white] (1,3) circle (0.08);
\draw[fill=black] (0,3) circle (0.08);

\end{scope}

 \begin{scope}[shift={(5,0)}]
        \draw[line width=0.7pt] (0,2)-- (0,3);
    \draw[line width=2pt,blue] (0,2)--(1,2);
    \draw[line width=0.7pt] (1,3)--(1,2);
    \draw[line width=0.7pt] (1,3)--(0,3);
    
\draw[-{Stealth[length=2.5mm]}] (0,0)-- (0,0.93);
\draw[-{Stealth[length=2.5mm]}] (1,1)--(1,0.07);
  \draw[-{Stealth[length=2.5mm]}] (1,1)-- (0.07,1);
   \draw[-{Stealth[length=2.5mm]}] (1,0)-- (0.07,0);

    \draw[fill=white] (0,0) circle (0.08);
\draw[fill=black] (1,0) circle (0.08);
\draw[fill=white] (1,1) circle (0.08);
\draw[fill=black] (0,1) circle (0.08);

  \draw[fill=white] (0,2) circle (0.08);
\draw[fill=black] (1,2) circle (0.08);
\draw[fill=white] (1,3) circle (0.08);
\draw[fill=black] (0,3) circle (0.08);

\end{scope}

	\end{tikzpicture}
\caption{Kasteleyn orientation of elementary boxes.}
\label{BoxBip}
\end{figure}
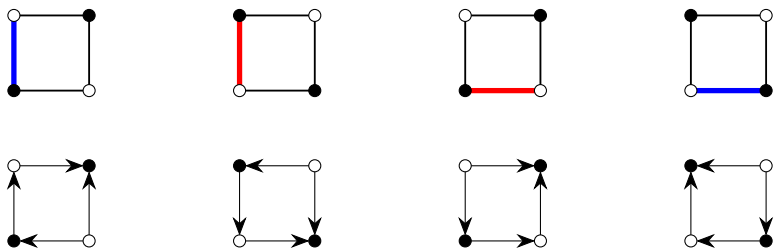 

\begin{rem}
Observe that the boxes in Figure~\ref{BoxBip} can be glued to each other if an only if the edges have the same orientation and 
the same vertex coloring.
\end{rem}

Although there are many different ways to chose a Kasteleyn orientation on every snake graph,
the orientation described above is particularly simple to define and will be considered as canonical.

\begin{ex}
The snake graph in Figure~\ref{SnakeBip} represents the rational $\frac{13}{3}=[4,3]$.

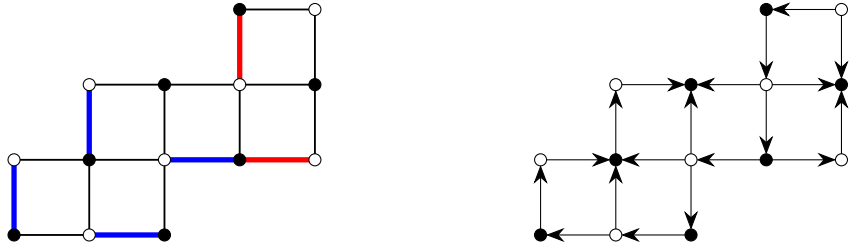
\begin{figure}[H]
	\centering
	\begin{tikzpicture}[scale=1]

	 \begin{scope}[shift={(-4,0)}]
    \draw[line width=0.7pt] (0,0)-- (1,0);
    \draw[line width=2pt,blue] (1,0)-- (2,0);
    \draw[line width=2pt,blue] (0,0)-- (0,1);
    \draw[line width=0.7pt] (0,1)--(2,1)--(2,0);
    \draw[line width=0.7pt] (1,1)--(1,0);
     \draw[line width=2pt,red] (3,1)-- (4,1);
    \draw[line width=0.7pt] (4,1)--(4,2);
 \draw[line width=0.7pt] (3,2)--(4,2);
\draw[line width=0.7pt] (3,3)--(4,3);
\draw[line width=0.7pt] (4,2)--(4,3);
 \draw[line width=2pt,red] (3,2)-- (3,3);
     \draw[line width=2pt,blue] (1,1)--(1,2);
    \draw[line width=0.7pt] (2,1)--(2,2);
    \draw[line width=0.7pt] (1,2)--(3,2);
     \draw[line width=2pt,blue] (2,1)-- (3,1);
    \draw[line width=0.7pt] (3,1)--(3,2);
    \draw[fill=black] (0,0) circle (0.08);
\draw[fill=white] (1,0) circle (0.08);
\draw[fill=black] (1,1) circle (0.08);
\draw[fill=white] (0,1) circle (0.08);
\draw[fill=black] (2,0) circle (0.08);
\draw[fill=white] (2,1) circle (0.08);
\draw[fill=black] (3,1) circle (0.08);
\draw[fill=white] (4,1) circle (0.08);
\draw[fill=black] (4,2) circle (0.08);
\draw[fill=black] (3,3) circle (0.08);
\draw[fill=white] (4,3) circle (0.08);

\draw[fill=white] (1,2) circle (0.08);
\draw[fill=black] (2,2) circle (0.08);
\draw[fill=white] (3,2) circle (0.08);
\end{scope}

	 \begin{scope}[shift={(3,0)}]
    \draw[-{Stealth[length=2.5mm]}] (1,0)-- (0.07,0);
    \draw[-{Stealth[length=2.5mm]}] (2,0)-- (1.07,0);
    \draw[-{Stealth[length=2.5mm]}] (0,0)-- (0,0.93);
    \draw[-{Stealth[length=2.5mm]}] (2,1)--(2,0.07);
    \draw[-{Stealth[length=2.5mm]}] (1,0)--(1,0.93);
   \draw[-{Stealth[length=2.5mm]}] (0,1)--(0.93,1);
   \draw[-{Stealth[length=2.5mm]}] (2,1)--(1.07,1);
       \draw[-{Stealth[length=2.5mm]}] (1,1)-- (1,1.93);
 \draw[-{Stealth[length=2.5mm]}] (2,1)-- (2,1.93);
  \draw[-{Stealth[length=2.5mm]}] (1,2)--(1.93,2);
  \draw[-{Stealth[length=2.5mm]}] (3,2)--(3.93,2);
   \draw[-{Stealth[length=2.5mm]}] (3,2)--(2.07,2);
  \draw[-{Stealth[length=2.5mm]}] (3,1)--(2.07,1);
  \draw[-{Stealth[length=2.5mm]}] (3,1)--(3.93,1);
   \draw[-{Stealth[length=2.5mm]}] (4,1)-- (4,1.93);
    \draw[-{Stealth[length=2.5mm]}] (3,2)--(3,1.07);
\draw[-{Stealth[length=2.5mm]}] (3,3)--(3,2.07);
\draw[-{Stealth[length=2.5mm]}] (4,3)--(4,2.07);
  \draw[-{Stealth[length=2.5mm]}] (4,3)-- (3.07,3);

    \draw[fill=black] (0,0) circle (0.08);
\draw[fill=white] (1,0) circle (0.08);
\draw[fill=black] (1,1) circle (0.08);
\draw[fill=white] (0,1) circle (0.08);
\draw[fill=black] (2,0) circle (0.08);
\draw[fill=white] (2,1) circle (0.08);
\draw[fill=black] (3,1) circle (0.08);
\draw[fill=white] (4,1) circle (0.08);
\draw[fill=black] (4,2) circle (0.08);
\draw[fill=black] (3,3) circle (0.08);
\draw[fill=white] (4,3) circle (0.08);

\draw[fill=white] (1,2) circle (0.08);
\draw[fill=black] (2,2) circle (0.08);
\draw[fill=white] (3,2) circle (0.08);
\end{scope}

	\end{tikzpicture}
\caption{A colored snake graph as a bipartite graph
with a Kasteleyn orientation.}
\label{SnakeBip}
\end{figure} 
\end{ex}

Theorem~\ref{SnakeThm} implies the following statement.

\begin{cor}
\label{KastelDet}
The determinant of the Kasteleyn matrix of the snake graph $\G_{\frac{r}{s}}$
equipped with the weight system from Definition~\ref{MainDefn}
and orientation from Definition~\ref{KastOr} coincides
(up to a scalar multiple) with the polynomial $\cR(q)$ in the numerator of~$\left[\frac{r}{s}\right]_q$.
\end{cor}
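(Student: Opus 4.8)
The plan is to combine Theorem~\ref{SnakeThm} (equivalently the detailed version Theorem~\ref{SnakeThmBis}) with the Kasteleyn theorem recalled above, so the corollary should be essentially immediate once we verify that the hypotheses of Kasteleyn's theorem are met by our snake graph, weight system, and orientation. First I would observe that a snake graph $\G_{\frac{r}{s}}$, viewed as the bipartite graph from Figure~\ref{SnakeBip} with the down-left vertex colored black, has equal numbers of black and white vertices whenever it admits a perfect matching (which it does, since $\cM_q(\G_{\frac{r}{s}})\neq0$); this is the balance condition needed for $M(\G_{\frac{r}{s}})$ to be a square matrix. I would then invoke Kasteleyn's theorem in the weighted form stated in the excerpt, namely $\left|\det(M(\G))\right|=\cM(\G)$, applied to $\G=\G_{\frac{r}{s}}$ with the edge weights of Definition~\ref{MainDefn}. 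This gives
\begin{equation*}
\left|\det\bigl(M(\G_{\frac{r}{s}})\bigr)\right|=\cM_q(\G_{\frac{r}{s}}),
\end{equation*}
and feeding this into~\eqref{MainEq} of Theorem~\ref{SnakeThmBis} yields $\cR(q)=q^{n}\left|\det(M(\G_{\frac{r}{s}}))\right|$, which is exactly the claimed equality up to the explicit scalar $q^{n}$ with $n$ as in~\eqref{Len}.

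The one genuine thing to check — and the step I expect to be the crux — is that Definition~\ref{KastOr} really does furnish a valid Kasteleyn orientation, i.e.\ that for every cycle of length $4m$ the number of black-to-white arrows is odd and for every cycle of length $4m+2$ it is even. The remark after Definition~\ref{MainDefn} that every elementary box has exactly one colored edge, together with the orientation rule (colored edges black-to-white, uncolored edges white-to-black), is precisely designed to make this a local computation. I would argue that the cycle condition is automatically satisfied on each elementary $4$-box: a box has three uncolored and one colored edge, and tracing the orientations in Figure~\ref{BoxBip} shows the number of black-to-white arrows around each box boundary has the correct parity. Since a snake graph is built by gluing boxes edge-to-edge in a simply-connected (tree-like, linear) fashion, every cycle is a boundary of a union of boxes, and I would promote the single-box parity statement to all cycles by the standard additivity argument for Kasteleyn orientations: the parity of black-to-white arrows around the boundary of a union of faces is determined by summing the contributions of the individual faces modulo $2$, with the consistency of orientation on shared (interior) edges — guaranteed by the gluing remark after Figure~\ref{BoxBip} — ensuring the interior edges cancel in the count.

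The main obstacle, then, is not the algebra but making the face-additivity argument clean for the specific planar structure of snake graphs. Because snake graphs are stacked so that consecutive boxes share exactly one edge and no box is ever enclosed, the only cycles are boundaries of ``staircase'' subregions, and I would verify that each such boundary has length $\equiv 2 \pmod 4$ or $\equiv 0 \pmod 4$ in tandem with the required arrow parity. I expect this to reduce to checking the four box types in Figure~\ref{BoxBip} and confirming that when two admissible boxes are glued along a shared edge (with matching orientation and vertex-coloring, as the remark guarantees), the Kasteleyn condition on the combined $6$-cycle boundary follows from the two individual $4$-cycle conditions. Once the orientation is verified to be Kasteleyn, no further work is needed: the corollary is a direct consequence of Theorem~\ref{SnakeThm} and the cited Kasteleyn theorem, with the scalar multiple being the same $q^{n}$ appearing in Theorem~\ref{SnakeThmBis}.
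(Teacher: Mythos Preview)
Your proposal is correct and follows the same approach as the paper: the corollary is stated there as an immediate consequence of Theorem~\ref{SnakeThm} together with the Kasteleyn theorem, with the paper justifying the Kasteleyn condition on the orientation only by the remark that ``with this convention the conditions of Kasteleyn orientation are clearly preserved'' and the picture of the four elementary boxes. If anything, your face-additivity discussion is more careful than what the paper actually writes down.
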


\subsection{Numbering of vertices and $4$-diaginal Kasteleyn matrices}

Clearly, the determinant of the Kasteleyn matrix does not depend on the numeration
of black and white vertices 
(indeed, different choices lead to renumeration of basis vectors and.transposed Kasteleyn matrices).
However, in the case of snake graphs there is a way to number the vertices in such a way
that the Kasteleyn matrix becomes $4$-diagonal 
and has a form which is very similar to that of continuants
of continued fractions.

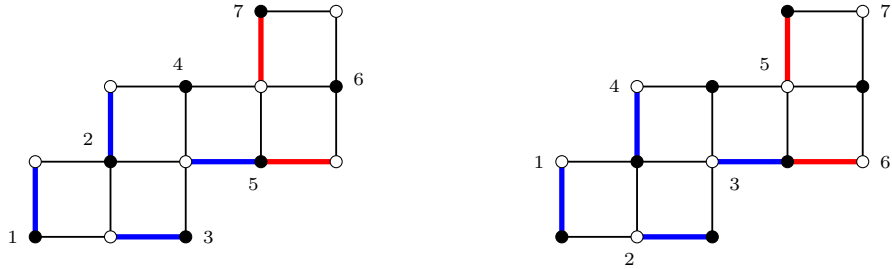
\begin{figure}[H]
	\centering
	\begin{tikzpicture}[scale=1]
 \begin{scope}[shift={(-4,0)}]
    \draw[line width=0.7pt] (0,0)-- (1,0);
    \draw[line width=2pt,blue] (1,0)-- (2,0);
    \draw[line width=2pt,blue] (0,0)-- (0,1);
    \draw[line width=0.7pt] (0,1)--(2,1)--(2,0);
    \draw[line width=0.7pt] (1,1)--(1,0);
     \draw[line width=2pt,red] (3,1)-- (4,1);
    \draw[line width=0.7pt] (4,1)--(4,2);
 \draw[line width=0.7pt] (3,2)--(4,2);
\draw[line width=0.7pt] (3,3)--(4,3);
\draw[line width=0.7pt] (4,2)--(4,3);
 \draw[line width=2pt,red] (3,2)-- (3,3);
     \draw[line width=2pt,blue] (1,1)--(1,2);
    \draw[line width=0.7pt] (2,1)--(2,2);
    \draw[line width=0.7pt] (1,2)--(3,2);
     \draw[line width=2pt,blue] (2,1)-- (3,1);
    \draw[line width=0.7pt] (3,1)--(3,2);
    \draw[fill=black] (0,0) circle (0.08);

\draw[fill=white] (1,0) circle (0.08);
\draw[fill=black] (1,1) circle (0.08);
\draw[fill=white] (0,1) circle (0.08);
\draw[fill=black] (2,0) circle (0.08);
\draw[fill=white] (2,1) circle (0.08);
\draw[fill=black] (3,1) circle (0.08);
\draw[fill=white] (4,1) circle (0.08);
\draw[fill=black] (4,2) circle (0.08);
\draw[fill=black] (3,3) circle (0.08);
\draw[fill=white] (4,3) circle (0.08);

\draw[fill=white] (1,2) circle (0.08);
\draw[fill=black] (2,2) circle (0.08);
\draw[fill=white] (3,2) circle (0.08);

\node at (-0.3, 0)  {$\scriptstyle{1}$};
\node at (0.7, 1.3)  {$\scriptstyle{2}$};
\node at (2.3, 0)  {$\scriptstyle{3}$};
\node at (1.9, 2.3)  {$\scriptstyle{4}$};
\node at (2.9, 0.7)  {$\scriptstyle{5}$};
\node at (4.3, 2.1)  {$\scriptstyle{6}$};
\node at (2.7, 3)  {$\scriptstyle{7}$};

\end{scope}

	 \begin{scope}[shift={(3,0)}]
	   \draw[line width=0.7pt] (0,0)-- (1,0);
    \draw[line width=2pt,blue] (1,0)-- (2,0);
    \draw[line width=2pt,blue] (0,0)-- (0,1);
    \draw[line width=0.7pt] (0,1)--(2,1)--(2,0);
    \draw[line width=0.7pt] (1,1)--(1,0);
     \draw[line width=2pt,red] (3,1)-- (4,1);
    \draw[line width=0.7pt] (4,1)--(4,2);
 \draw[line width=0.7pt] (3,2)--(4,2);
\draw[line width=0.7pt] (3,3)--(4,3);
\draw[line width=0.7pt] (4,2)--(4,3);
 \draw[line width=2pt,red] (3,2)-- (3,3);
     \draw[line width=2pt,blue] (1,1)--(1,2);
    \draw[line width=0.7pt] (2,1)--(2,2);
    \draw[line width=0.7pt] (1,2)--(3,2);
     \draw[line width=2pt,blue] (2,1)-- (3,1);
    \draw[line width=0.7pt] (3,1)--(3,2);
    \draw[fill=black] (0,0) circle (0.08);

\draw[fill=white] (1,0) circle (0.08);
\draw[fill=black] (1,1) circle (0.08);
\draw[fill=white] (0,1) circle (0.08);
\draw[fill=black] (2,0) circle (0.08);
\draw[fill=white] (2,1) circle (0.08);
\draw[fill=black] (3,1) circle (0.08);
\draw[fill=white] (4,1) circle (0.08);
\draw[fill=black] (4,2) circle (0.08);
\draw[fill=black] (3,3) circle (0.08);
\draw[fill=white] (4,3) circle (0.08);

\draw[fill=white] (1,2) circle (0.08);
\draw[fill=black] (2,2) circle (0.08);
\draw[fill=white] (3,2) circle (0.08);

\node at (-0.3, 1)  {$\scriptstyle{1}$};
\node at (0.9, -0.3)  {$\scriptstyle{2}$};
\node at (2.3, 0.7)  {$\scriptstyle{3}$};
\node at (0.7, 2)  {$\scriptstyle{4}$};
\node at (2.7, 2.3)  {$\scriptstyle{5}$};
\node at (4.3, 1)  {$\scriptstyle{6}$};
\node at (4.3, 3)  {$\scriptstyle{7}$};
	 \end{scope}

	\end{tikzpicture}
\caption{Numbering of black and white vertices of the snake graph $\G_{\frac{13}{3}}$.}
\label{NVFig}
\end{figure}

\begin{ex}
The $q$-deformed rational corresponding to the snake graph 
in Figures~\ref{SnakeBip} and~\ref{NVFig} is
$$
\left[\frac{13}{3}\right]_q=
\frac{1+2q+3q^2+3q^3+2q^4+q^5+q^6}{1+q+q^2},
$$
which is easy to calculate using the formulas from Section~\ref{RatSec}.

On the other hand, the Kasteleyn matrix associated to this snake graph is as follows
$$
M_{\frac{13}{3}}=
\begin{pmatrix}
q&-1&0&0&0&0&0\\[2pt]
-1&-1&q&-1&0&0&0\\[2pt]
0&q&0&-1&0&0&0\\[2pt]
0&0&-1&-1&-1&0&0\\[2pt]
0&0&0&q&-1&q^{-1}&0\\[2pt]
0&0&0&0&q^{-1}&0&-1\\[2pt]
0&0&0&0&-1&-1&-1
\end{pmatrix}
$$
and its determinant is 
$$
\det(M_{\frac{13}{3}})=q^{-2}+2q^{-1}+3+3q+2q^2+q^3+q^4=q^{-2}\,\cR_{\frac{13}{3}}(q),
$$
in accordance with Theorem~\ref{SnakeThmBis}.
\end{ex}

\begin{rem}
By elementary transformations, the obtained matrices are conjugate to $3$-diagonal matrices
that have the same determinant.
For instance, in the last example we have 
$$
\det(M_{\frac{13}{3}})=
\left|
\begin{array}{ccccccc}
q&-1&0&0&0&0&0\\[2pt]
-1&-[2]_q&q&0&0&0&0\\[2pt]
0&q&0&-1&0&0&0\\[2pt]
0&0&-1&-1&-1&0&0\\[2pt]
0&0&0&q&-[2]_{q^{-1}}&q^{-1}&0\\[2pt]
0&0&0&0&q^{-1}&0&-1\\[2pt]
0&0&0&0&0&-1&-1
\end{array}
\right|,
$$
where $[2]_q=1+q$.
\end{rem}

\subsection{Kasteleyn determinants for the $q$-deformed Fibonacci sequence}\label{FiKast}

The example of this section can also be found in~\cite{OveP} (see Remark 6).
The snake graphs corresponding to the rationals 
$\frac{F_{n+1}}{F_n}$ considered in Section~\ref{FiboSec}
are vertical strips of $n-1$ boxes
\begin{figure}[H]
	\centering
\begin{tikzpicture}[scale=0.8]
   
    \draw[line width=0.7pt] (0,0)--(1,0);
    \draw[line width=2pt,blue] (0,0)-- node[left]{$q$}(0,1);
    \draw[line width=2pt,red] (0,1)--node[left]{$q^{-1}$}(0,2);
    \draw[line width=2pt,blue] (0,2)--node[left]{$q$}(0,3);
     \draw[line width=2pt,red] (0,3)--node[left]{$q^{-1}$}(0,4);
    
    \draw[line width=0.7pt] (1,0)--(1,4);
    \draw[line width=0.7pt] (0,3)--(1,3);
    \draw[line width=0.7pt] (0,1)--(1,1);
    \draw[line width=0.7pt] (0,2)--(1,2);
    \draw[line width=0.7pt] (0,4)--(1,4);
    \draw[line width=0.7pt] (0,6)--(1,6);
    \draw[dashed] (1,4)--(1,6);
    \draw[dashed] (0,4)--(0,6);
      
    \draw[-{Stealth[length=2.5mm]}] (3,3.5)--(3,6);
    
    \node    at    (3,3)        {$n-1$};
    
    \draw[-{Stealth[length=2.5mm]}] (3,2.5)--(3,0);
    
     \draw[fill=black] (0,0) circle (0.08);
\draw[fill=white] (1,0) circle (0.08);
\draw[fill=black] (1,1) circle (0.08);
\draw[fill=white] (0,1) circle (0.08);
 \draw[fill=black] (0,2) circle (0.08);
\draw[fill=white] (1,2) circle (0.08);
\draw[fill=black] (1,3) circle (0.08);
\draw[fill=white] (0,3) circle (0.08);
\draw[fill=black] (0,4) circle (0.08);
\draw[fill=white] (1,4) circle (0.08);

	\end{tikzpicture}
\caption{Fibonacci vertical snakes.}
\label{FiboSnake}
\end{figure} 

\noindent
Graps of this type can be called snake graphs of $A$-type.
The corresponding Kasteleyn determinant is the following $n\times n$ determinant
$$
\det\Big(M_{\frac{F_{n+1}}{F_n}}\Big)
=
\left|
\begin{array}{ccccccccc}
1&\;1&&&\\[4pt]
-q&\;1&-q^{-1}&&\\[4pt]
&\;1&1&1&&\\[4pt]
&&-q&1&-q^{-1}&&\\[4pt]
&&&\ddots&\ddots&\!\!\ddots&\\[4pt]
&&&&-q&1&-q^{-1}\\[6pt]
&&&&&1&1
\end{array}
\right|
$$
(where $n$ is odd, it ends by $(-q\;,1)$ when $n$ is even).
It counts the number of weighted perfect matchings 
in the Fibonacci snake graph in Figure~\ref{FiboSnake}.

According to Theorem~\ref{SnakeThmBis}, the polynomial in the numerator of
$
\left[\frac{F_{n+1}}{F_{n}}\right]_q
$
is given by the above determinant, up to a scalar factor.
More precisely, taking the scalar factor~\eqref{Len} into account, 
we have the following corollary of Theorem~\ref{SnakeThmBis}.
$$
\tilde\F_{n+1}(q)
=
\left|
\begin{array}{ccccccccc}
1&\;1&&&\\[4pt]
-q^2&\;q&-1&&\\[4pt]
&\;1&\;1&1&&\\[4pt]
&&-q^2&q&-1&&\\[4pt]
&&&\ddots&\ddots&\!\!\ddots&\\[4pt]
&&&&-q^2&q&-1\\[6pt]
&&&&&1&\;1
\end{array}
\right|.
$$
This determinant coincides (up to some signs) 
with the continuant for the continued fraction~\eqref{FiboCF}.

\begin{rem}
The following continued fraction expansion
\begin{equation}
\label{TrueGoldCF}
\left[\varphi\right]_q\;=\;
\cfrac{1}{1-
          \cfrac{q^2}{1+
          \cfrac{q}{1-
           \cfrac{q^2}{1+\cfrac{q}{\ddots}}}}} 
\end{equation}
 was found in~\cite{OP}.
It is quite different from~\eqref{FiboCF} and satisfies some remarkable properties.
It is interesting to understand if there is a weight system and orientation on the
(finite or infinite) $A$-type snake graphs in Figure~\ref{FiboSnake}
such that the corresponding Kasteleyn determinant coincides with the continuant
determinant of the continued fraction~\eqref{TrueGoldCF}.
\end{rem}

\bigskip

\noindent{\bf Acknowledgments}.
We are grateful to Sam Evans, Perrine Jouteur, Ralf Schiffler, and especially
Sophie Morier-Genoud for enlightening discussions.


\bigskip

\noindent
{Valentin Ovsienko,
Centre National de la Recherche Scientifique,
Laboratoire de Math\'e\-ma\-tiques,
Universit\'e de Reims Champagne Ardenne,
Moulin de la Housse - BP 1039,
51687 Reims cedex 2,
France},\\
{Email:  valentin.ovsienko@univ-reims.fr}

\end{document}